\documentclass[11pt, a4paper]{amsart}
\usepackage{amsfonts}

\newtheorem{theorem}{\bf Theorem}[section]
\newtheorem{remark}{\bf Remark}[section]

\newtheorem{corollary}{Corollary}[section]
\newtheorem{spcase}{Special Case}[section]

\newtheorem{definition}{Definition}[section]

\usepackage{enumerate,mathrsfs}
\usepackage{amssymb,amsmath,graphicx,amsthm,mathtools,hyperref}

\usepackage{booktabs}

\theoremstyle{plain}
\usepackage{siunitx}

\usepackage{algorithm}
\usepackage{algorithmic,colortbl}

\makeindex
\usepackage[intoc]{nomencl} 

\makenomenclature

\setlength{\textheight}{530pt} \setlength{\textwidth}{470pt}
\oddsidemargin -0mm \evensidemargin -0mm \topmargin -0pt
\usepackage{hyperref}
\hypersetup{nesting=true,debug=true,naturalnames=true}
\usepackage{graphicx,amssymb,upref}

\usepackage{subcaption}
\usepackage{enumerate,float}
\usepackage{fullpage}

\begin{document}

\title{ Fractional Generalizations of the Compound Poisson Process}
\author[]{Neha Gupta}
\address{\emph{Department of Theoretical Statistics and Mathematics Unit,
		Indian Statistical Institute Delhi, New Delhi - 110016, India.}}
\email{nehagupta@isid.ac.in}
\author[]{Aditya Maheshwari}
\address{\emph{Operations Management and Quantitative Techniques Area, Indian Institute of Management Indore, Indore 453556, India.}}
\email{adityam@iimidr.ac.in}


			\keywords{Time-fractional Poisson process, Compound Poisson process.}
			\subjclass{60G22, 60G51, 60G55. }
			
			\begin{abstract}
				This paper introduces the Generalized Fractional Compound Poisson Process (GFCPP), which claims to be a unified fractional version of the compound Poisson process (CPP) that encompasses existing variations as special cases. We derive its distributional properties, generalized fractional differential equations, and martingale properties. Some results related to the governing differential equation about the special cases of jump distributions, including exponential, Mittag-Leffler, Bernst\'ein, discrete uniform, truncated geometric, and discrete logarithm. Some of processes in the literature such as the fractional Poisson process of order $k$, P\'olya-Aeppli process of order $k$, and fractional negative binomial process becomes the special case of the GFCPP. Classification based on arrivals by time-changing the compound Poisson process by the inverse tempered and the inverse of inverse Gaussian subordinators are studied. Finally, we present the simulation of the sample paths of the above-mentioned processes.  
			\end{abstract}
			\maketitle
			
			\section{Introduction}
			
			The compound Poisson process (CPP) is one the most important stochastic models for count data with random jump events. It generalizes the classical unit jump size of the Poisson process to random jump size distribution. This model is distinguished by its ability to effectively represent real-world scenarios where the size or impact of events varies and it is particularly useful in situations where extreme events of random sizes play a crucial role. Therefore, it becomes a natural model for 
			wide range of applications in various sectors including insurance \cite{app:ins1,app:ins}, reliability \cite{app:reliability}, statistical physics \cite{app:statphy}, mining \cite{app:mining}, evolutionary biology \cite{app:evo-bio} and many more. Due to its wide applicability, it always remains a central of attraction of both theoretical and applied probabilists.  \\\\
			In this context, the generalizations of the CPP becomes an important problem to consider. Several attempts has been made to generalize the CPP by various authors and here we mention important results in the literature from fractional generalizations point of view. The first fractional generalization of the CPP was proposed by \cite{MeerSche2004,vietnam}, and a semi-Markov extension of the CPP was discussed by \cite{frac-CPP-Scalas}. Later, in \cite{beghinejp2009,beghin-2012}, the authors studied alternative forms of the compound fractional Poisson processes and derived several important results about their time-changed versions, governing differential equations and limiting behaviour of the processes. These fractional forms (see \cite{BegClau14}) are defined by time-changing the CPP by inverse stable subordinator and also by changing the jump size distribution. A multivariate extension of the fractional generalizations of the CPP is introduced in \cite{multi-cpp}. In \cite{CPP-poiss}, the Poisson subordinated CPP is considered. More recently, some fractional versions of the CPP by changing jump size and/or by  time-changing the CPP are examined in \cite{ayushicpp,fppok-1,kataria-cpp, kataria2022generalized}.\\\\
			After examining the existing literature on this topic, we felt a need for a unified fractional form of the CPP and therefore, we introduce a fractional generalization of the  CPP such that most of the studied CPP become a special case of the proposed process.  Our process is defined as   
			\begin{equation*}
				Y_{f}(t): = \sum_{i=1}^{N_{f}(t)} X_i,
			\end{equation*}
			where $N_{f}(t)\stackrel{d}{=}N(E_f(t))$ is the Poisson process time-changed with an independent inverse subordinator $\{E_{f}(t)\}_{t\geq 0}$ \eqref{inverse-sub} and $X_i$'s$,\;i=1,2,\ldots,$ are the iid jumps with common distribution $F_X$. We call this process as the  generalized fractional CPP (GFCPP). It is to note that we have assumed the most generalized form on the count and jump size distributions and therefore, this formulation will serve as an all-encompassing process for any type of  fractional generalization of the CPP. We compute the distributional properties and the governing generalized fractional differential equation of the probability mass function (\textit{pmf}) of the GFCPP. We also prove that the compensated GFCPP is a martingale with respect to its natural filtration. \\\\
			The special cases of the jump distribution $X_i,i=1,2,\ldots$, namely, exponential, Mittag-Leffler, Bernst\'ein, discrete uniform, truncated geometric, and discrete logarithm, are investigated. We obtain their Laplace Transform (LT), governing fractional differential equations and time-changed representations. Specifically to mention that this approach generalize the fractional Poisson process of order $k$ (\cite{tcppok,fppok-1}), the  P\'olya-Aeppli process of order $k$ (see \cite{chukova2015polya}), and  the fractional negative binomial process (see \cite{fnbpfp,BegClau14}). The classification of the GFCPP based on arrivals, particularly,  tempered fractional Poisson process and inverse of inverse Gaussian time-change of the  Poisson process are worked out. We further discuss their special cases based on jump sizes and obtain the LT and governing fractional differential equations. Lastly, we present the simulations for the special cases of the aforementioned processes. \\\\
			The paper is organized as follows. In Section \ref{sec:pre}, we present some preliminary definitions and results that are required for the rest of the paper. The GFCPP is examined in detailed in Section \ref{sec:gfcpp}. The classification of the special cases of the GFCPP based on jump sizes and arrivals are discussed in Sections \ref{sec:jump} and \ref{sec:arr}, respectively. In Section \ref{sec:sim}, the simulations of the sample paths are presented.

			\section{Preliminaries}\label{sec:pre}
			\noindent In this section, we introduce some notations and results that will be used later. \subsection{L\'evy subordinator and its inverse}
			
			A L\'evy subordinator (hereafter referred to as the subordinator) $\{D_{f}(t)\}_{t\geq0}$ is a non-decreasing L\'evy process and its Laplace transform (LT) (see \cite[Section 1.3.2]{appm}) has the form
			\begin{equation*}
				\mathbb{E}[e^{-s D_{f}(t)}]=e^{-tf(s)},
				\;{\rm where}\; 
				f(s)=b s+\int_{0}^{\infty}(1-e^{-s x})\nu(dx),~b\geq0, s>0,
			\end{equation*}
			is the Bernst\'ein function (see \cite{Bernstein-book} for more details). 
			Here $b$ is the drift coefficient and $\nu$ is a non-negative L\'evy measure on positive half-line satisfying 
			\begin{equation*}
				\int_{0}^{\infty}(x\wedge 1)\nu(dx)<\infty~~{\rm and}~~\nu([0,\infty))= \infty
			\end{equation*}
			which ensures that the sample paths of $\{D_{f}(t)\}_{t \geq 0}$ are almost surely $(a.s.)$  strictly increasing.
			Also, the first-exit time of $\{D_f(t)\}_{t\geq0}$ is defined as
			\begin{equation}\label{inverse-sub}
				E_{f}(t)=\inf\{r\geq 0:D_{f}(r)>t\},
			\end{equation}
			which is the right-continuous inverse of the subordinator $\{D_f(t)\}_{t\geq 0}$.
			The process $\{E_{f}(t)\}_{t \geq 0}$ is non-decreasing and its sample paths are continuous.
			\noindent We list some special cases of strictly increasing subordinators. The following subordinators with Laplace exponent denoted by $f(s)$ are very often used in literature.
			\begin{equation}\label{Levy_exponent}
				f(s) =
				\begin{cases}
					s^{\alpha},\; 0<\alpha<1, & $(stable subordinator)$;\\
					(s+\mu)^{\alpha}-\mu^{\alpha},\;\mu >0,\; 0<\alpha<1,&$(tempered stable  subordinator)$;\\
					\delta(\sqrt{2s+\gamma^2}-\gamma),\; \gamma>0,\; \delta>0, & $(inverse Gaussian subordinator)$.
				\end{cases}
			\end{equation}
			\subsection{Compound Poisson Process}
			A compound Poisson process is a continuous-time process with iid jumps $X_i,i=1,2,\ldots$. 
			A compound Poisson process $\{Y(t)\}_{t \geq 0}$ is given by
			\begin{equation}\label{CPP}    
				Y(t)=\sum_{i=1}^{N(t)}X_i,
			\end{equation}
			where $X_i$'s follows $F_X$ distribution and jumps arrive  randomly according to an independent Poisson process $N(t)$ with intensity rate $\lambda >0$. 
			\noindent The LT of the  CPP $\{Y(t)\}_{t\geq 0}$ is given by 
			\begin{align}\label{LT:CPP}
				\mathbb{E}[e^{-sY(t)}] = \mathbb{E}[e^{-\lambda t (1-\mathbb{E}[e^{-sX_1}])}].
			\end{align}
			The \textit{pmf} $P(n,t)=\mathbb{P}[Y(t)=n]$ of the  CPP is given by
			$$
			P(n,t)= \sum_{m=1}^{\infty}F_X^{*m}(n) \frac{e^{-\lambda t}(\lambda t)^m}{m!}, 
			$$
			where $F_X^{*m}$ is the $m$-fold convolution of the density of $F_X$.
			\subsection{Generalized fractional derivatives}
			Let $f$ be a Bernst\'ein function with integral representation 
			$$ f(s) =  \int_{0}^{\infty} (1-e^{-x s}) \nu(dx),\;\; s>0.$$
			We will use the generalized Caputo-Djrbashian (C-D) derivative with respect to the Bernst\'ein function $f$, which is defined on the space of absolutely continuous functions as follows (see \cite{Toa-gene}, Definition 2.4)
			\begin{equation}\label{Gen_Caputo_FD}
				\mathcal{D}^{f}_{t}u(t)=b\frac{d}{dt}u(t)+\int \frac{\partial}{\partial t}u(t-s)\Bar{\nu}(s)ds,
			\end{equation}
			where $\Bar{\nu}(s) = a + \Bar{\nu}(s, \infty)$ is the tail of the L\'evy measure.\\
			The generalized Riemann-Liouville (R-L)
			derivative according to the Bernst\'ein function $f$ as (see \cite[Definition 2.1]{Toa-gene})
			\begin{equation}\label{gen_R-L}
				\mathbb{D}^{f}_{t}u(t)=b\frac{d}{dt}u(t)+\frac{d}{dt}\int u(t-s)\Bar{\nu}(s)ds.
			\end{equation}
			The relation between $\mathbb{D}^{f}_{t}$ and $\mathcal{D}^{f}_{t}$ are given by (see \cite{Toa-gene}, Proposition 2.7)
			\begin{equation}\label{RElation_RL_C}
				\mathbb{D}^{f}_{t}u(t)= \mathcal{D}^{f}_{t}u(t)+\Bar{\nu}(t) u(0).
			\end{equation}
			\subsection{LRD for non-stationary process}\label{LRD_definition}
			Let $s>0$ be fixed and $t>s$. Suppose a stochastic process $\{X(t)\}_{t\geq0}$ has the correlation function Corr$(X(s),X(t))$ that satisfies
			\begin{equation}
				c_1(s)t^{-d}\leq\text{Corr}(X(s),X(t))\leq c_2(s)t^{-d},
			\end{equation}
			for large $t$, $d>0$, $c_1(s)>0$ and $c_2(s)>0$. In other words, 
			\begin{equation}
				\lim\limits_{t\to\infty}\frac{\text{Corr}(X(s),X(t))}{t^{-d}}=c(s),
			\end{equation}for some $c(s)>0$ and $d>0.$  We say $\{X(t)\}_{t\geq0}$ has the LRD property if $d\in(0,1)$.
			

		\section{Generalized Fractional Compound Poisson Processes}\label{sec:gfcpp}
		\noindent In this section, we introduce the generalized fractional compound Poisson process and study their properties. 
		
		\begin{definition}[Generalized fractional compound Poisson process ]\label{def:gcfpp}
			Let $N_{f}(t)\stackrel{d}{=}N(E_f(t))$ be the Poisson process time-changed with  the inverse subordinator $\{E_{f}(t)\}_{t \geq 0}$ (see \cite{mnv}) and let
			$X_i,\;i=1,2,\ldots,$ be the iid jumps with common
			distribution $F_X$. The process defined by
			\begin{equation}\label{GFCPP}
				Y_{f}(t) := \sum_{i=1}^{N_{f}(t)} X_i,t\geq0,
			\end{equation}
			is called the generalized fractional compound Poisson process (GFCPP).
		\end{definition}
		
		\noindent Using \eqref{LT:CPP}, we obtain the LT of the  \textit{pmf} of the GFCPP  $\{Y_{f}(t)\}_{t\geq0}$. It is given by
		\begin{align}\label{LT:GCFPP}
			\mathbb{E}[e^{-sY_{f}(t)}] &= \mathbb{E}\left[\mathbb{E}\left[\sum_{i=1}^{N_f(t)} X_i|E_{f}(t)\right]\right]= \mathbb{E}[e^{-\lambda E_{f}(t) (1-\mathbb{E}[e^{-sX_1}])}].
		\end{align}
		\noindent It is clear from the above LT that we can express the GFCPP as a time-changed CPP $$ Y_{f}(t)\stackrel{d}{=}Y(E_f(t)),t\geq0,$$ where  $Y(t)=\sum_{i=0}^{N(t)} X_i$ denotes the CPP. \\
		\noindent Next, we present the governing generalized fractional differential equation of the \textit{pmf} of the GFCPP.
		\begin{theorem}\label{Pro_DDE_GFCPP}
			The \textit{pmf} $P_{f}(n,t)=\mathbb{P}[Y_f(t)=n]$  satisfies following fractional differential equation
			\begin{align}\label{Diff_Gcpp}
				\mathcal{D}^{f}_{t} P_{f}(n,t) &= -\lambda  P_{f}(n,t) +\lambda \int_{-\infty}^{\infty} P_{f}(n-x,t) F_X(x) dx.
			\end{align}
		\end{theorem}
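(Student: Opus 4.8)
The plan is to start from the subordination representation $Y_f(t)\stackrel{d}{=}Y(E_f(t))$ established just above, and transfer the known governing equation of the CPP through the inverse subordinator. First I would record the difference-differential (forward Kolmogorov) equation satisfied by the \textit{pmf} $P(n,t)=\mathbb P[Y(t)=n]$ of the ordinary CPP: differentiating $\mathbb E[e^{-sY(t)}]=e^{-\lambda t(1-\mathbb E[e^{-sX_1}])}$ in $t$ gives $\frac{d}{dt}\mathbb E[e^{-sY(t)}]=-\lambda(1-\mathbb E[e^{-sX_1}])\mathbb E[e^{-sY(t)}]$, and inverting the Laplace transform in the spatial variable yields
\begin{equation*}
\frac{\partial}{\partial t}P(n,t)=-\lambda P(n,t)+\lambda\int_{-\infty}^{\infty}P(n-x,t)\,F_X(x)\,dx,
\end{equation*}
i.e. $P(\cdot,t)$ solves the equation in \eqref{Diff_Gcpp} but with the ordinary derivative $\frac{d}{dt}$ in place of $\mathcal D^f_t$, subject to $P(n,0)=\mathbb P[Y(0)=n]$.

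Next I would invoke the standard subordination principle for inverse subordinators: if $q(n,t)$ solves $\frac{\partial}{\partial t}q(n,t)=\mathcal G q(n,t)$ for a spatial operator $\mathcal G$ (here $\mathcal G q=-\lambda q+\lambda\int q(n-x,\cdot)F_X(x)dx$), then the time-changed \textit{pmf} $P_f(n,t)=\int_0^\infty q(n,\tau)\,h_t(\tau)\,d\tau$, where $h_t$ is the density of $E_f(t)$, solves $\mathcal D^f_t P_f(n,t)=\mathcal G P_f(n,t)$ with the same initial condition. Concretely I would take Laplace transforms in $t$: writing $\tilde P_f(n,s)=\int_0^\infty e^{-st}P_f(n,t)\,dt$ and using the well-known relation $\int_0^\infty e^{-st}h_t(\tau)\,dt=\frac{f(s)}{s}e^{-\tau f(s)}$ for the inverse subordinator associated with the Bernstein function $f$, one gets $\tilde P_f(n,s)=\frac{f(s)}{s}\int_0^\infty e^{-\tau f(s)}q(n,\tau)\,d\tau=\frac{f(s)}{s}\tilde q(n,f(s))$. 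Then applying the Laplace transform to $\frac{\partial}{\partial t}q=\mathcal G q$ gives $f(s)\tilde q(n,f(s))-q(n,0)=\mathcal G\tilde q(n,f(s))$; multiplying by $f(s)/s$ and rearranging, and using the Laplace-transform characterization of $\mathcal D^f_t$ (namely $\widetilde{\mathcal D^f_t u}(s)=\frac{f(s)}{s}\bigl(s\tilde u(s)-u(0)\bigr)$, which follows from \eqref{Gen_Caputo_FD}), identifies the left side as $\widetilde{\mathcal D^f_t P_f}(n,s)$ and the right side as $\widetilde{\mathcal G P_f}(n,s)$, since $\mathcal G$ acts only in the $n$ variable and commutes with the time-Laplace transform. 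Inverting the Laplace transform in $t$ then yields \eqref{Diff_Gcpp}.

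The main obstacle, and the step deserving the most care, is the rigorous justification of the Laplace-transform identity for $\mathcal D^f_t$ and the interchange of $\mathcal D^f_t$ with the subordinating integral $\int_0^\infty q(n,\tau)h_t(\tau)\,d\tau$; this requires knowing that $P_f(n,\cdot)$ lies in the domain of $\mathcal D^f_t$ (absolute continuity, suitable decay) so that the formula \eqref{Gen_Caputo_FD} and its Laplace transform apply, and that Fubini is applicable in the convolution with the Lévy-measure tail $\bar\nu$. I would handle this by citing the mapping results of \cite{Toa-gene} (and the subordination framework of \cite{mnv}) that precisely establish $\mathcal D^f_t\bigl[\int_0^\infty q(n,\tau)h_t(\tau)d\tau\bigr]=\int_0^\infty \bigl(\partial_\tau q(n,\tau)\bigr)h_t(\tau)d\tau$ for this class of inner functions $q$; alternatively one can verify everything at the level of Laplace transforms as sketched, which sidesteps the pointwise domain issues provided uniqueness of the Laplace inversion is invoked. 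A secondary point worth a remark is that $\int_{-\infty}^{\infty}P_f(n-x,t)F_X(x)\,dx$ should be read as $\mathbb E[P_f(n-X_1,t)]$, matching the CPP \textit{pmf} formula $P(n,t)=\sum_m F_X^{*m}(n)e^{-\lambda t}(\lambda t)^m/m!$, and that all the manipulations above go through verbatim with $\sum$/convolution in the discrete-jump case and with the stated integral in the general case.
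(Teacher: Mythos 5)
Your proof is correct and arrives at \eqref{Diff_Gcpp}, but the mechanism by which you push the time derivative through the subordination integral is genuinely different from the paper's. Both arguments start from $P_f(n,t)=\int_0^\infty P(n,\tau)h_f(\tau,t)\,d\tau$ together with the forward equation of the ordinary CPP (which you derive from the Laplace transform; note that the paper's displayed version \eqref{diff-cpp} carries a sign typo, $+\lambda P$ instead of $-\lambda P$, which your derivation silently corrects). The paper then works in the time domain: it applies the generalized Riemann--Liouville derivative \eqref{gen_R-L} under the integral, uses the governing equation $\mathbb{D}^f_t h_f(\tau,t)=-\partial_\tau h_f(\tau,t)$ of the inverse-subordinator density from \cite{Toa-gene}, integrates by parts in $\tau$, substitutes the CPP equation, and finally converts $\mathbb{D}^f_t$ into $\mathcal{D}^f_t$ via \eqref{RElation_RL_C}; the boundary term $P(n,0)h_f(0,t)=P_f(n,0)\bar\nu(t)$ produced by the integration by parts is exactly what is absorbed by that conversion. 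You instead work entirely in Laplace space, using $\int_0^\infty e^{-st}h_f(\tau,t)\,dt=\frac{f(s)}{s}e^{-\tau f(s)}$ and the symbol $\frac{f(s)}{s}\bigl(s\tilde u(s)-u(0)\bigr)$ of $\mathcal{D}^f_t$ --- the standard \emph{subordination principle} route. Your version buys a cleaner handling of the initial/boundary terms (everything reduces to algebra in $s$, with uniqueness of Laplace inversion replacing the pointwise domain issues), at the cost of invoking the LT identities for $h_f$ and $\mathcal{D}^f_t$; the paper's version stays at the level of densities but leans on the boundary evaluation in the integration by parts and on the R-L/Caputo relation. Both routes face the same analytic caveat (interchanging the nonlocal derivative with the subordinating integral), which you flag explicitly, and your remark that $\int_{-\infty}^{\infty}P_f(n-x,t)F_X(x)\,dx$ must be read as $\mathbb{E}[P_f(n-X_1,t)]$ is apt, since the paper uses $F_X$ for both the distribution and its density.
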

		\begin{proof} Let $h_{f}(y,t)$ be the probability density function (\textit{pdf}) of inverse subordinator $\{E_f(t)\}_{t \geq 0}$ and $P(n,t)$ be \textit{pmf} of CPP. Using conditional argument, we have that
			$$
			\mathbb{P}(Y_{f}(t) \in dz)=\int_{0}^{\infty} \mathbb{P}(Y(x) \in dz) h_{f}(x,t) dy.
			$$
			We now take generalized Riemann-Liouville
			derivative given by \eqref{gen_R-L} on both sides of above equation, we get
			\begin{align}
				\mathbb{D}^{f}_{t} P_{f}(n,t)&=  \int_{0}^{\infty}p(n,x)\mathbb{D}^{f}_{t}h_f(x,t) dx.\nonumber\\
				&=-\int_{0}^{\infty}P(n,y)\frac{\partial}{\partial x}h_f(x,t)dx \nonumber\\
				&= -P(n,x)h_f(x,t)|_{0}^{\infty} + \int_{0}^{\infty} \frac{\partial}{\partial x}P(n,x)h_f(x,t) dx. \label{rl-pmf1}
			\end{align}
			It is known that (see \cite{FPP-IS}) the distribution of CPP satisfies the following partial differential equation 
			\begin{equation}\label{diff-cpp}
				\frac{\partial}{\partial y}P(n,x) = \lambda P(n,x)+ \lambda \int_{-\infty}^{+\infty}P(n-u,y)F_X(u)du.
			\end{equation}
			\noindent Substituting \eqref{diff-cpp} in \eqref{rl-pmf1} and subsequently using the relation \eqref{RElation_RL_C}, we obtain the result mentioned in \eqref{Diff_Gcpp}. This completes the proof.
		\end{proof}
		\noindent Further, we discuss some distributional properties of the GFCPP.
		\begin{theorem}\label{m_v_c_GFCPP}
			
			The mean, variance and covariance of GFCPP is given by:
			\begin{enumerate}[(i)]
				\item $ \mathbb{E}[Y_f(t)] =\lambda \mathbb{E}[E_{f}(t)]\mathbb{E}[X_1]$
				\item ${\rm Var}[Y_f(t)] = \lambda \mathbb{E}[E_f(t)]\mathbb{E}[X_1^2]+\lambda^2(\mathbb{E}[X_1])^2{\rm Var}[E_f(t)]$
				\item ${\rm Cov}[Y_f(t),Y_f(s)]  = \lambda \mathbb{E}[X_1^2]\mathbb{E}[E_f(s)]+\lambda^2(\mathbb{E}[X_1])^2{\rm Cov}[E_f(t),E_f(s)].$
			\end{enumerate}
			\begin{proof} Using the conditional argument and independence of $N_f(t)$ and $X_i$, we have that 
				$$
				\mathbb{E}[Y_f(t)]= \mathbb{E}[N_f(t)]\mathbb{E}[X_1]=\lambda \mathbb{E}[E_f(t)]\mathbb{E}[X_1].
				$$
				The variance of $\{Y_f(t)\}_{t \geq 0}$ can be written as (see   \cite{LRD2014})
				$$
				{\rm Var}[Y_f(t)] = {\rm Var}[X_1]\mathbb{E}[N_f(t)]+\mathbb{E}[X_1]{\rm Var}[N_f(t)].
				$$
				Next, we compute the ${\rm Cov}[Y_f(t),Y_f(s)]$,  $s\leq t$,
				\begin{align*}
					{\rm Cov}[Y_f(t),Y_f(s)]&= \mathbb{E}\left[\sum_{k=1}^{\infty}X_k^2 \mathbb{I}\{N_f(s)\geq k\}\right]+\mathbb{E}\left[ \sum_{i\neq j}\sum X_i X_k \mathbb{I}\{N_f(s)\geq k, N_f(t)\geq i\}\right]\\
					& \hspace{3cm}-(\mathbb{E}[X_1])^2\mathbb{E}[N_f(s)]\mathbb{E}[N_f(t)]\\
					&= \mathbb{E}[X_1^2]\sum_{k=1}^{\infty}\mathbb{P}(N_f(s)\geq k)\\
					&\hspace{3cm}+(\mathbb{E}[X_1])^2\left[\sum_{i=1}^{\infty}\sum_{k=1}^{\infty}\mathbb{P}(N_f(s)\geq k, N_f(t)\geq i)-\mathbb{P}(N_f(s)\geq k)\right]\\
					&\hspace{3cm}-(\mathbb{E}[X_1])^2\mathbb{E}[N_f(s)]\mathbb{E}[N_f(t)]\\
					&=  \mathbb{E}[X_1^2] \mathbb{E}[N_f(s)]+(\mathbb{E}[X_1])^2\mathbb{E}[N_f(s) N_f(t)]\\
					&\hspace{3cm}-(\mathbb{E}[X_1])^2\mathbb{E}[N_f(s)]-(\mathbb{E}[X_1])^2\mathbb{E}[N_f(s)]\mathbb{E}[N_f(t)]\\
					&= {\rm Var}[X_1]\mathbb{E}[N_f(s)]+(\mathbb{E}[X_1])^2{\rm Cov}[N_f(t),N_f(s)].
				\end{align*}
				The expression for the covariance of time-changed Poisson process, that is ${\rm Cov}[N_f(t),N_f(s)]$, is derived in \cite{LRD2014}. Substituting the same in the above equation, we get the desired result.
			\end{proof}
		\end{theorem}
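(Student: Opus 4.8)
The plan is to exploit the representation $Y_f(t)=\sum_{i=1}^{N_f(t)}X_i$ as a random sum whose iid summands are independent of the counting process $N_f(t)=N(E_f(t))$, and to push every computation down to moments of the subordinated Poisson process $N_f$, which in turn reduce to moments of $E_f$ by conditioning on the subordinator. Throughout I take $s\le t$ and use that $N(u)$ is Poisson with $\mathbb{E}[N(u)]={\rm Var}[N(u)]=\lambda u$ and ${\rm Cov}[N(u),N(v)]=\lambda(u\wedge v)$.

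For (i), I would condition on $N_f(t)$ and apply Wald's identity, $\mathbb{E}[Y_f(t)]=\mathbb{E}[N_f(t)]\mathbb{E}[X_1]$, and then condition on $E_f(t)$ to get $\mathbb{E}[N_f(t)]=\mathbb{E}[\lambda E_f(t)]=\lambda\mathbb{E}[E_f(t)]$. For (ii), I would use the classical compound-variance identity ${\rm Var}[Y_f(t)]=\mathbb{E}[N_f(t)]\,{\rm Var}[X_1]+(\mathbb{E}[X_1])^2\,{\rm Var}[N_f(t)]$ (law of total variance, conditioning on $N_f(t)$), together with ${\rm Var}[N_f(t)]=\lambda\mathbb{E}[E_f(t)]+\lambda^2\,{\rm Var}[E_f(t)]$ (law of total variance, conditioning on $E_f(t)$, using ${\rm Var}[N(u)]=\mathbb{E}[N(u)]=\lambda u$). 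Collecting terms and using ${\rm Var}[X_1]+(\mathbb{E}[X_1])^2=\mathbb{E}[X_1^2]$ yields the stated expression.

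The substantive part is (iii). I would write $Y_f(t)=\sum_{i\ge 1}X_i\mathbb{I}\{N_f(t)\ge i\}$ and likewise for $Y_f(s)$, so that $\mathbb{E}[Y_f(t)Y_f(s)]=\sum_{i,k}\mathbb{E}\!\left[X_iX_k\,\mathbb{I}\{N_f(t)\ge i,\,N_f(s)\ge k\}\right]$, the interchange of sum and expectation being justified by Tonelli. Splitting into the diagonal $i=k$ and the off-diagonal $i\ne k$: on the diagonal, monotonicity of $E_f$ — hence of $N_f$ — gives $\{N_f(t)\ge k,\,N_f(s)\ge k\}=\{N_f(s)\ge k\}$, so this part equals $\mathbb{E}[X_1^2]\sum_k\mathbb{P}(N_f(s)\ge k)=\mathbb{E}[X_1^2]\,\mathbb{E}[N_f(s)]$; off the diagonal, independence of distinct $X_i,X_k$ from each other and from the process gives $(\mathbb{E}[X_1])^2\sum_{i\ne k}\mathbb{P}(N_f(t)\ge i,\,N_f(s)\ge k)$, and since $\sum_{i,k}\mathbb{P}(N_f(t)\ge i,\,N_f(s)\ge k)=\mathbb{E}[N_f(s)N_f(t)]$ while the removed diagonal contributes $\sum_k\mathbb{P}(N_f(s)\ge k)=\mathbb{E}[N_f(s)]$, the off-diagonal sum equals $\mathbb{E}[N_f(s)N_f(t)]-\mathbb{E}[N_f(s)]$. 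Subtracting $\mathbb{E}[Y_f(t)]\mathbb{E}[Y_f(s)]=(\mathbb{E}[X_1])^2\mathbb{E}[N_f(t)]\mathbb{E}[N_f(s)]$ collapses everything to ${\rm Cov}[Y_f(t),Y_f(s)]={\rm Var}[X_1]\,\mathbb{E}[N_f(s)]+(\mathbb{E}[X_1])^2\,{\rm Cov}[N_f(t),N_f(s)]$. I would then invoke the formula for ${\rm Cov}[N_f(t),N_f(s)]$ from \cite{LRD2014} — equivalently obtained by conditioning on $(E_f(t),E_f(s))$ and using ${\rm Cov}[N(u),N(v)]=\lambda(u\wedge v)$ with $E_f(s)=E_f(s)\wedge E_f(t)$, namely ${\rm Cov}[N_f(t),N_f(s)]=\lambda\mathbb{E}[E_f(s)]+\lambda^2\,{\rm Cov}[E_f(t),E_f(s)]$ — substitute $\mathbb{E}[N_f(s)]=\lambda\mathbb{E}[E_f(s)]$, and combine once more via ${\rm Var}[X_1]+(\mathbb{E}[X_1])^2=\mathbb{E}[X_1^2]$ to reach the claimed identity.

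The main obstacle I anticipate is the bookkeeping in (iii): cleanly separating the diagonal and off-diagonal contributions to the double indicator sum and, above all, justifying $\{N_f(t)\ge k,\,N_f(s)\ge k\}=\{N_f(s)\ge k\}$ from the monotonicity of the inverse subordinator $E_f$ — the identity on which both the diagonal term and the correction to the off-diagonal term hinge. The interchange of expectation with the infinite sums raises no real difficulty under a finite-second-moment assumption on $X_1$, so no delicate convergence argument is required; everything else is algebraic simplification.
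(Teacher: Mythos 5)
Your proposal is correct and follows essentially the same route as the paper: Wald's identity plus conditioning on $E_f(t)$ for the mean, the compound-variance decomposition combined with the law of total variance for ${\rm Var}[N_f(t)]$, and for the covariance the same indicator expansion $Y_f(t)=\sum_i X_i\mathbb{I}\{N_f(t)\ge i\}$ with the diagonal/off-diagonal split, the monotonicity identity $\{N_f(t)\ge k, N_f(s)\ge k\}=\{N_f(s)\ge k\}$, and the substitution of ${\rm Cov}[N_f(t),N_f(s)]$ from \cite{LRD2014}. Your write-up is in fact slightly more careful than the paper's (you justify the interchange of sum and expectation and state the second-moment hypothesis explicitly, and you correctly write $(\mathbb{E}[X_1])^2$ in the variance identity where the paper has a typo).
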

		\noindent Next, we prove the martingale property for the compensated GCFPP. Consider the compensated GCFPP defined by 
		\begin{equation*}
			M_{f}(t):=Y_{f}(t)-\lambda E_f(t) \mathbb{E}[X_1],\;\; t\geq 0.
		\end{equation*}
		
		\begin{theorem}
			Let $\mathbb{E}[X_i]<\infty, i=1,2,\ldots$.  The compensated GCFPP $\{M_{f}(t)\}_{t\geq 0} $ is a martingale with respect to a natural filtration $\mathscr{F}_{t}= (N_f(s), s\leq t) \vee \sigma(E_{f}(s), s\leq t)$. 
			\begin{proof}
				It is to note that (see \cite{fppok-1}) the compensated time-changed Poisson process $Q(t):=N_f(t)-\lambda E_{f}(t)$ is a martingale with respect to the filtration $\mathscr{F}_{t}  = (N_f(s), s\leq t) \vee \sigma(E_{f}(s), s\leq t)$. We have that 
				\begin{align*}
					\mathbb{E}[M_{f}(t)-M_{f}(s)|\mathscr{F}_{s}] &=\mathbb{E}\left[\sum_{i=1}^{N_{f}(t)} X_i - \lambda \mathbb{E}[X_1]E_f(t) -\left(\sum_{i=1}^{N_{f}(s)} X_i-\lambda \mathbb{E}[X_1] E_f(s) \right) \Bigg| \mathscr{F}_{s} \right]\\
					&= \mathbb{E}\left[\left(\sum_{i=N_{f}(s)+1}^{N_{f}(t)} X_i - \lambda a (E_f(t)-E_f(s))\right)\Bigg|\mathscr{F}_{s}\right]\\
					& = \mathbb{E}\left[\mathbb{E}\left[\sum_{i=N_{f}(s)+1}^{N_{f}(t)} X_i\Bigg| \mathscr{F}_{t}\right] \Bigg| \mathscr{F}_{s} \right] - \lambda a \mathbb{E}\left[E_f(t)-E_f(s)\Bigg| \mathscr{F}_{s}\right]\\
					& = \mathbb{E}\left[a (N_f(t)-N_f(s))-  \lambda a (E_f(t)-E_f(s))| \mathscr{F}_{s} \right]\\
					&=a\mathbb{E}\left[Q(t)-Q(s)| \mathscr{F}_{s}\right]\\
					&=0,
				\end{align*}
				since the $\{Q(t)\}_{t\geq0}$ is a $\mathscr{F}_{t}$-martingale. This completes the proof.
			\end{proof}
		\end{theorem}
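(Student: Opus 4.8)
The plan is to verify the two defining properties of a martingale — adaptedness together with integrability, and the conditional-expectation identity on increments — using as a black box the known fact (quoted from \cite{fppok-1}) that the compensated time-changed Poisson process $Q(t):=N_f(t)-\lambda E_f(t)$ is an $\mathscr{F}_t$-martingale.

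First I would dispose of integrability. Writing $a:=\mathbb{E}[X_1]$, Wald's identity (or the conditioning argument already used in Theorem \ref{m_v_c_GFCPP}) gives $\mathbb{E}|Y_f(t)|\le \mathbb{E}[N_f(t)]\,\mathbb{E}|X_1| = \lambda\,\mathbb{E}[E_f(t)]\,\mathbb{E}|X_1|$, which is finite because the inverse subordinator $E_f(t)$ has finite mean for each fixed $t$; the compensator term $\lambda E_f(t)a$ is integrable for the same reason, so $\mathbb{E}|M_f(t)|<\infty$. Adaptedness of $M_f(t)$ to the natural filtration is then immediate, once the filtration is understood to carry the information of the jumps used up to time $t$ in addition to $\sigma(N_f(u),E_f(u):u\le t)$.

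The core of the argument is the increment computation for $s\le t$. Since $Y_f(t)-Y_f(s)=\sum_{i=N_f(s)+1}^{N_f(t)}X_i$, I would write
\begin{align*}
\mathbb{E}[M_f(t)-M_f(s)\mid\mathscr{F}_s]
&= \mathbb{E}\!\left[\sum_{i=N_f(s)+1}^{N_f(t)}X_i-\lambda a\,(E_f(t)-E_f(s))\,\Big|\,\mathscr{F}_s\right]
\end{align*}
and then insert an inner conditioning on $\mathscr{F}_t$ via the tower property. Conditionally on $\mathscr{F}_t$ the number of summands $N_f(t)-N_f(s)$ is known, while the jump sizes are independent of $\mathscr{F}_t$ with common mean $a$, so $\mathbb{E}\big[\sum_{i=N_f(s)+1}^{N_f(t)}X_i\mid\mathscr{F}_t\big]=a\,(N_f(t)-N_f(s))$. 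Substituting this back gives $\mathbb{E}[M_f(t)-M_f(s)\mid\mathscr{F}_s]=a\,\mathbb{E}[Q(t)-Q(s)\mid\mathscr{F}_s]$, which vanishes by the martingale property of $Q$.

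The step I expect to require the most care is the inner identity $\mathbb{E}\big[\sum_{i=N_f(s)+1}^{N_f(t)}X_i\mid\mathscr{F}_t\big]=a\,(N_f(t)-N_f(s))$: one must make precise that the family of jumps $(X_i)_{i\ge 1}$ is independent of $\sigma(N_f(u),E_f(u):u\le t)$, so that even though the index range $N_f(s)+1,\dots,N_f(t)$ is random and $\mathscr{F}_t$-measurable, averaging over the iid jumps contributes exactly the factor $a$ per summand — this is the "sum with an $\mathscr{F}_t$-measurable number of independent terms" form of Wald's lemma. Once this point is secured, the rest is bookkeeping and a direct appeal to the cited martingale property of the compensated time-changed Poisson process.
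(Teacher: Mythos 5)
Your proposal follows essentially the same route as the paper: quote the martingale property of the compensated time-changed Poisson process $Q(t)=N_f(t)-\lambda E_f(t)$, write the increment $M_f(t)-M_f(s)$, insert an inner conditioning on $\mathscr{F}_t$ so that the random sum of jumps collapses to $\mathbb{E}[X_1]\,(N_f(t)-N_f(s))$ by independence, and conclude via the tower property. Your additional attention to integrability and to the precise sense in which the iid jumps are independent of the filtration is a welcome tightening of details the paper leaves implicit, but the argument is the same.
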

		
		\section{Generalized time-fractional compound Poisson process}\label{sec:jump}
		\noindent In the previous section, we discussed some important properties and results related to the GFCPP. Note that Definition \ref{def:gcfpp} assumes general distribution $F_X$ on jump size. In this section, we consider several special cases of distribution on the jump size $X_i,i=1,2,\ldots$ of the GFCPP  and study their properties. We call this process as generalized time-fractional CPP (GTFCPP) and it can also be expressed as the CPP time-changed  with the inverse subordinator $\{Y(E_f(t))\}_{t \geq 0}$, where $\{Y(t)\}_{t \geq 0}$ and $\{E_f(t)\}_{t \geq 0}$ are independent with specific jump distribution in the CPP.
		
		\subsection{GTFCPP with exponential jumps} In this subsection, we assume that the jumps $X_i,\;i=1,2,\ldots,$ are exponentially distributed with parameter $\eta>0$ and denote it by 
		\begin{equation}\label{Time_CFPP}
			Y_f^\eta(t) := \sum_{i=1}^{N_{f}(t)} X_i, t\geq0.
		\end{equation}
		
		\noindent Using \eqref{LT:GCFPP}, we obtain the LT of the density $P^{\eta}_{f}(x,t)$, given by
		\begin{align}\label{LT_GTFCPP}
			\mathcal{L}[P^{\eta}_{f}(x,t)] = \mathbb{E}[e^{-sY_f^\eta(t)}] = \mathbb{E}[e^{-\lambda E_{f}(t) \frac{s}{s+\eta}}].
		\end{align}
		Next, we derive the differential equation associated with the density $P^{\eta}_{f}(x,t)$ of the GTFCPP with exponential jumps $\{Y_f^\eta(t)\}_{t\geq 0}$.
		\begin{theorem}
			The \textit{pdf} $P^{\eta}_{f}(x,t)$ of $\{Y_f^\eta(t)\}_{t\geq 0}$ satisfy following fractional differential equation
			\begin{align}\label{Diff_GFCPP_expo}
				\eta \mathcal{D}^{f}_{t} P^{\eta}_{f}(x,t) &= -\left[\lambda +\mathcal{D}^{f}_{t}\right]\frac{\partial}{\partial x}  P^{\eta}_{f}(x,t).
			\end{align}
			with following conditions
			$$
			P^{\eta}_{f}(x,0)=0, \; \; 
			\mathbb{P}(Y_f^\eta (t) >0)= 1-\mathbb{E}[e^{-\lambda E_f(t)}].
			$$
			\begin{proof}Consider the subordinated form of the \textit{pdf} $P^{\eta}_{f}(x,t)$
				$$
				P^{\eta}_{f}(x,t)=\int_{0}^{\infty} P(x,y)h_f(y,t) dy,
				$$
				where $h_{f}(y,t)$ is \textit{pdf} of inverse subordinator $\{E_f(t)\}_{t \geq 0}$.
				Taking generalized Riemann-Liouville
				derivative given by \eqref{gen_R-L}, we get
				\begin{align}
					\mathbb{D}^{f}_{t} P^{\eta}_{f}(x,t)&=  \int_{0}^{\infty} P_{Y}(x,y)\mathbb{D}^{f}_{t}h_f(y,t) dy.\nonumber\\
					&=-\int_{0}^{\infty} P_{Y}(x,y)\frac{\partial}{\partial y}h_f(y,t)dy\nonumber\\
					&= - P_{Y}(x,y)h_f(y,t)|_{0}^{\infty} + \int_{0}^{\infty} \frac{\partial}{\partial y} P_{Y}(x,y)h_f(y,t) dy.\label{eq:thm41}
				\end{align}
			Note that (see \cite{beghin-2012}) the \textit{pdf} $P_{Y}(x,t)$ of CPP satisfies the following equation  
				$$
				\eta \frac{\partial}{\partial t}P_{Y}(x,t)=-\left[\lambda +  \frac{\partial}{\partial t} \right]\frac{\partial}{\partial x}P_{Y}(x,t).
				$$
				Substituting the above equation in \eqref{eq:thm41}  and using \eqref{RElation_RL_C}, we obtain the desired result.
			\end{proof}
		\end{theorem}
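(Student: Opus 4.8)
The plan is to exploit the subordination representation $Y_f^\eta(t) \stackrel{d}{=} Y(E_f(t))$, so that the \textit{pdf} factors as
$$P^{\eta}_{f}(x,t)=\int_{0}^{\infty} P_Y(x,y)\,h_f(y,t)\, dy,$$
where $P_Y$ is the \textit{pdf} of the ordinary CPP with exponential jumps and $h_f(\cdot,t)$ is the density of the inverse subordinator $\{E_f(t)\}_{t\geq 0}$. First I would apply the generalized Riemann--Liouville operator $\mathbb{D}^{f}_{t}$ from \eqref{gen_R-L} to both sides; the crucial input is the governing equation $\mathbb{D}^{f}_{t} h_f(y,t) = -\tfrac{\partial}{\partial y} h_f(y,t)$ for the density of $\{E_f(t)\}_{t\geq0}$, which converts the nonlocal time operator into an ordinary $y$-derivative under the integral sign.

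Next I would integrate by parts in $y$. This produces the boundary term $-P_Y(x,y)h_f(y,t)\big|_{0}^{\infty}$, which I expect to vanish ($h_f$ decays at infinity while $P_Y(x,\cdot)$ is well behaved at the origin for $x>0$), leaving $\int_0^\infty \tfrac{\partial}{\partial y}P_Y(x,y)\,h_f(y,t)\,dy$. At this point I would substitute the known governing PDE for the CPP with exponential jumps,
$$\eta \frac{\partial}{\partial t}P_{Y}(x,t)=-\Big[\lambda +  \frac{\partial}{\partial t} \Big]\frac{\partial}{\partial x}P_{Y}(x,t),$$
taken from \cite{beghin-2012}, which lets me replace $\partial_y P_Y$ by a combination of $\partial_x P_Y$ and $\partial_x\partial_y P_Y$; re-integrating against $h_f$ turns the first term into $-\tfrac{\lambda}{\eta}\partial_x P^\eta_f$ and the second, after another differentiation under the integral, into $-\tfrac{1}{\eta}\partial_x\mathbb{D}^f_t P^\eta_f$.

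Finally I would pass from the Riemann--Liouville form back to the Caputo--Djrbashian form using the identity \eqref{RElation_RL_C}, namely $\mathbb{D}^{f}_{t}u(t) = \mathcal{D}^{f}_{t}u(t) + \bar\nu(t)u(0)$. Since $Y_f^\eta(0)=0$ almost surely, we have $P^\eta_f(x,0)=0$ for $x>0$, so the correction terms $\bar\nu(t)P^\eta_f(x,0)$ cancel on both sides and the identity collapses to \eqref{Diff_GFCPP_expo}. The side condition $\mathbb{P}(Y_f^\eta(t)>0)=1-\mathbb{E}[e^{-\lambda E_f(t)}]$ I would verify directly from $\mathbb{P}(Y_f^\eta(t)=0)=\mathbb{P}(N_f(t)=0)=\mathbb{E}[e^{-\lambda E_f(t)}]$. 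The main obstacle is making the two interchanges rigorous — differentiation under the integral sign and the integration by parts with genuinely vanishing boundary contributions — which requires controlling the regularity and decay of $h_f(\cdot,t)$ and of $P_Y(x,\cdot)$; once those analytic technicalities are granted, the remaining algebra is essentially a transcription of the CPP identity through the subordinator.
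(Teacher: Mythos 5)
Your proposal is correct and follows essentially the same route as the paper's proof: subordination representation, applying $\mathbb{D}^{f}_{t}$ under the integral via the governing equation of $h_f$, integration by parts, substitution of the CPP equation from \cite{beghin-2012}, and conversion to the Caputo--Djrbashian form via \eqref{RElation_RL_C}. You in fact spell out a few points the paper leaves implicit (vanishing of the boundary term, why the $\bar\nu(t)P^\eta_f(x,0)$ correction drops out, and the direct verification of the side condition), which only strengthens the argument.
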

		\noindent As a special case of Theorem \ref{m_v_c_GFCPP}, the mean and covariance of the GTFCPP with exponential jumps $\{Y_f^\eta(t)\}_{t\geq 0}$ can be found as follows
		\begin{align*}
			\mathbb{E}[Y_f^\eta(t)] &=\frac{\lambda}{\eta} \mathbb{E}[E_{f}(t)];\\
			{\rm Var}[Y_f^{\eta}f(t)] &= \frac{2\lambda}{\eta^2} \mathbb{E}[E_f(t)]+\frac{\lambda^2}{\eta^2}{\rm Var}[E_f(t)];\\
			{\rm Cov}[Y_f^{\eta}(t),Y_f^{\eta}(s)] &= \frac{\lambda}{\eta^2} \mathbb {E}[E_f(s)]+\frac{1}{\eta^2}{\rm Cov}[N_f(t),N_f(s)],\;\; s<t.
		\end{align*}
		\subsection{ GTFCPP with Mittag-Leffler jumps} We now define the process
		\begin{equation*}
			Y_f^{\beta,\eta}(t) := \sum_{i=1}^{N_{f}(t)} X_i, t\geq0,
		\end{equation*}
		where jump size $X_i,\;i=1,2,\ldots,$ are the Mittag-Leffler distributed random variables with parameter $\beta$ and $\eta$ and having the \textit{pdf} as $q_{\beta,\eta}(x,t)= \eta x^{\beta-1}E_{\beta,\beta}(-\eta x^{\beta}), \; \; \beta \in(0,1], \; \eta >0$. \\
		The LT of the $\{Y_f^{\eta} (t)\}_{t\geq 0}$ is given by
		\begin{equation}\label{LT:CPP_ML}
			\mathcal{L}[P^{\beta, \eta}_f(x,t)] =  \mathbb{E}[e^{-\lambda E_{f}(t) \frac{s^\beta}{s^\beta+\eta}}].
		\end{equation}
		We now derive a time-changed representation of the GTFCPP with Mittag-Leffler jumps $\{Y_f^{\beta,\eta}(t)\}_{ t\geq 0}$.
		\begin{theorem}
			Consider an $\beta$-stable subordinator $\{D_\beta(t)\}_{t\geq0}$ time-changed with   an independent GTFCPP with exponential jump $\{Y_f^{\eta} (t)\}_{t\geq 0}$, i.e.  
			$$ Y^{\eta,\beta}_{f}(t) \stackrel{d}{=} D_\beta(Y_f^{\eta} (t)),\; \beta \in (0,1),\; \eta > 0, t\geq0.$$
		\end{theorem}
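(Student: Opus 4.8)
The plan is to establish the identity in distribution by matching one–dimensional Laplace transforms: since both $Y_f^{\beta,\eta}(t)$ and $D_\beta(Y_f^{\eta}(t))$ are supported on $[0,\infty)$, equality of their Laplace transforms at a fixed $t$ forces equality of laws. The only ingredient needed beyond what is already in the excerpt is the defining relation of the $\beta$-stable subordinator from \eqref{Levy_exponent}, namely $\mathbb{E}[e^{-sD_\beta(u)}]=e^{-us^{\beta}}$ for every fixed $u\ge 0$ and $s>0$.

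First I would condition on the value of $Y_f^{\eta}(t)$, using the independence of $\{D_\beta(t)\}_{t\ge 0}$ and $\{Y_f^{\eta}(t)\}_{t\ge 0}$:
\begin{align*}
	\mathbb{E}\!\left[e^{-sD_\beta(Y_f^{\eta}(t))}\right]
	=\mathbb{E}\!\left[\left.\mathbb{E}\!\left[e^{-sD_\beta(u)}\right]\right|_{u=Y_f^{\eta}(t)}\right]
	=\mathbb{E}\!\left[e^{-s^{\beta}Y_f^{\eta}(t)}\right].
\end{align*}
Next I would substitute $s\mapsto s^{\beta}$ in the Laplace transform \eqref{LT_GTFCPP} of the GTFCPP with exponential jumps, obtaining $\mathbb{E}[e^{-s^{\beta}Y_f^{\eta}(t)}]=\mathbb{E}[e^{-\lambda E_f(t)\,s^{\beta}/(s^{\beta}+\eta)}]$. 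Combining the two identities gives
\begin{align*}
	\mathbb{E}\!\left[e^{-sD_\beta(Y_f^{\eta}(t))}\right]=\mathbb{E}\!\left[e^{-\lambda E_f(t)\,\frac{s^{\beta}}{s^{\beta}+\eta}}\right],
\end{align*}
which is precisely the Laplace transform of $P^{\beta,\eta}_f(x,t)$ recorded in \eqref{LT:CPP_ML}. By uniqueness of Laplace transforms on $[0,\infty)$, this yields $Y_f^{\beta,\eta}(t)\stackrel{d}{=}D_\beta(Y_f^{\eta}(t))$ for each $t\ge 0$.

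There is no real obstacle here; the only step that deserves a sentence of justification is the $s^{\beta}/(s^{\beta}+\eta)$ factor on the CPP side, which comes from the fact that the Mittag-Leffler density $q_{\beta,\eta}(x)=\eta x^{\beta-1}E_{\beta,\beta}(-\eta x^{\beta})$ has Laplace transform $\eta/(s^{\beta}+\eta)$, so that $1-\mathbb{E}[e^{-sX_1}]=s^{\beta}/(s^{\beta}+\eta)$ and \eqref{LT:CPP_ML} follows from the general formula \eqref{LT:GCFPP}. If one wants equality as processes rather than equality of one–dimensional marginals, the same conditioning argument applied to finite–dimensional Laplace functionals works verbatim, using that $\{D_\beta(t)\}_{t\ge 0}$ has independent increments and is independent of $\{Y_f^{\eta}(t)\}_{t\ge 0}$; I would add a remark to that effect, though the pointwise statement is what the theorem asserts.
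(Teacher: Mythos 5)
Your proposal is correct and follows essentially the same route as the paper's own proof: condition on $Y_f^{\eta}(t)$, use $\mathbb{E}[e^{-sD_\beta(u)}]=e^{-us^{\beta}}$ to reduce to $\mathbb{E}[e^{-s^{\beta}Y_f^{\eta}(t)}]$, invoke \eqref{LT_GTFCPP}, and match against \eqref{LT:CPP_ML} by uniqueness of Laplace transforms. The extra sentence justifying the Mittag-Leffler Laplace transform $\eta/(s^{\beta}+\eta)$ is a welcome addition that the paper leaves implicit.
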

		\begin{proof}
			The LT of the \textit{pdf} of $\{Y^{\eta,\beta}_{f}(t)\}_{t \geq 0}$ is
			$$
			\mathbb{E}[e^{-sD_\beta(Y_f^{\eta} (t))}] = \mathbb{E}[\mathbb{E}[e^{-sD_\beta(Y_f^{\eta} (t))}|Y_f^{\eta} (t)]]= \mathbb{E}[e^{ -s^{\beta}y_{f}^{\eta}(t)}].
			$$
			Using \eqref{LT_GTFCPP}, we get
			$$
			\mathbb{E}[e^{-sD_\beta(Y_f^{\eta} (t))}]=\mathbb{E}[e^{-\lambda E^{f}(t) \frac{s^\beta}{s^\beta+\eta}}].
			$$
			Comparing the above equation with the LT \eqref{LT:CPP_ML} of the $\{Y_f^{\beta,\eta}(t)\}_{t\geq 0}$, we get the desired result.
		\end{proof}
		\noindent Next, we present the fractional differential equation for the \textit{pmf} of the GTFCPP with Mittag-Leffler jumps $\{Y_f^{\beta,\eta}(t)\}_{t\geq 0}.$
		\begin{theorem} 
			The \textit{pdf} $P^{\beta,\eta}_f(x,t)$ of $\{Y_f^{\beta,\eta}(t)\}_{t\geq 0}$ satisfy following fractional differential equation
			\begin{align*}
				\eta \mathcal{D}^{f}_{t} P^{\beta,\eta}_f(x, t) &= -\left[\lambda +\mathcal{D}^{f}_{t}\right] \mathcal{D}^{\beta}_xP^{\beta,\eta}_f(x, t),
			\end{align*}
			where $\mathcal{D}^{\beta}_t$ denotes the C-D fractional derivative (a special case of \eqref{Gen_Caputo_FD}) with the following conditions
			$$
			P^{\beta,\eta}_f(x, 0)=0, \; \; 
			\mathbb{P}[Y_f^{\beta,\eta} (t) >0]= 1-\mathbb{E}[e^{-\lambda E_f(t)}].
			$$
			\begin{proof} 
				Writing the \textit{pdf} $P^{\beta,\eta}_f(x,t)$ using the conditional probability approach and then taking a generalized R-L fractional derivative,  we have that 
				\begin{align}
					\mathbb{D}^{f}_{t} P^{\beta,\eta}_f(x,t)&=  \int_{0}^{\infty} l_{\beta}(x,y)\mathbb{D}^{f}_{t}P_{f}^{\eta}(y,t) dy, \;\;(\text{using \eqref{Diff_GFCPP_expo}})\nonumber\\
					&=-\frac{1}{\eta}\left[\lambda +\mathcal{D}^{f}_{t}\right]\int_{0}^{\infty}l_\beta(x,y)\frac{\partial}{\partial y}P_{f}^{\eta}(y,t) dy\nonumber\\
					&= -\frac{1}{\eta}\left[\lambda +\mathcal{D}^{f}_{t}\right] \left(-l_{\beta}(x,y) P_{f}^{\eta}(y,t)|_{0}^{\infty} + \int_{0}^{\infty} \frac{\partial}{\partial y} l_\beta(x,y)P_{f}^{\eta}(y,t) dy\right),\label{eq:thm43}
				\end{align}
				where $l_{\beta}(x,t)$ is the \textit{pdf} of $\beta$-stable subordinator which satisfies the equation
				
				\begin{equation}\label{eq:thm43-2}
					\mathcal{D}^{\beta}_{x}l_{\beta}(x,t)=-\frac{\partial}{\partial t}l_{\beta}(x,t),\;\;l_{\beta}(x,0)=\delta(x).\end{equation}
				We obtain the desired result by substituting \eqref{eq:thm43-2} and   \eqref{RElation_RL_C} in \eqref{eq:thm43}.
			\end{proof}
		\end{theorem}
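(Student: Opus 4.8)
The plan is to lean on the time-changed representation $Y_f^{\beta,\eta}(t)\stackrel{d}{=}D_\beta(Y_f^\eta(t))$ just established, which upon conditioning on the value of the exponential-jump process yields the subordination identity
$$
P^{\beta,\eta}_f(x,t)=\int_0^\infty l_\beta(x,y)\,P^{\eta}_f(y,t)\,dy ,
$$
where $l_\beta(x,y)$ is the density of the $\beta$-stable subordinator $D_\beta$ and $P^{\eta}_f(y,t)$ is the density of the GTFCPP with exponential jumps. This is the structural backbone; everything else is produced by differentiating this identity and feeding in the two governing equations \eqref{Diff_GFCPP_expo} and \eqref{eq:thm43-2}.

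First I would apply the generalized R--L operator $\mathbb{D}^f_t$ in the time variable to both sides. Since $l_\beta(x,y)$ does not depend on $t$, it passes inside the $dy$-integral, giving $\mathbb{D}^f_t P^{\beta,\eta}_f(x,t)=\int_0^\infty l_\beta(x,y)\,\mathbb{D}^f_t P^{\eta}_f(y,t)\,dy$. Because $P^{\eta}_f(y,0)=0$ for $y>0$, relation \eqref{RElation_RL_C} lets me replace $\mathbb{D}^f_t$ by $\mathcal{D}^f_t$ on $P^{\eta}_f$, after which \eqref{Diff_GFCPP_expo} rewrites the integrand as $-\tfrac1\eta[\lambda+\mathcal{D}^f_t]\,\partial_y P^{\eta}_f(y,t)$. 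Pulling the linear, $y$-free operator $[\lambda+\mathcal{D}^f_t]$ out of the integral reduces the problem to evaluating $\int_0^\infty l_\beta(x,y)\,\partial_y P^{\eta}_f(y,t)\,dy$.

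Next I would integrate by parts in $y$: the term at $y=\infty$ vanishes by decay, and the term at $y=0$ vanishes for $x>0$ (there $l_\beta(x,0)=\delta(x)=0$), leaving $-\int_0^\infty \partial_y l_\beta(x,y)\,P^{\eta}_f(y,t)\,dy$. Now \eqref{eq:thm43-2}, namely $\mathcal{D}^\beta_x l_\beta(x,y)=-\partial_y l_\beta(x,y)$, converts $\partial_y l_\beta$ into $-\mathcal{D}^\beta_x l_\beta$, and moving $\mathcal{D}^\beta_x$ back outside the $dy$-integral recognizes the result as $\mathcal{D}^\beta_x P^{\beta,\eta}_f(x,t)$. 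Assembling the pieces gives $\mathbb{D}^f_t P^{\beta,\eta}_f(x,t)=-\tfrac1\eta[\lambda+\mathcal{D}^f_t]\mathcal{D}^\beta_x P^{\beta,\eta}_f(x,t)$, and one last application of \eqref{RElation_RL_C} (using $P^{\beta,\eta}_f(x,0)=0$ for $x>0$) turns $\mathbb{D}^f_t$ into $\mathcal{D}^f_t$, producing the stated equation. The side conditions are immediate: $Y_f^{\beta,\eta}(0)=D_\beta(0)=0$ a.s. gives $P^{\beta,\eta}_f(x,0)=0$, and the strict monotonicity of $D_\beta$ gives $\{Y_f^{\beta,\eta}(t)>0\}=\{Y_f^{\eta}(t)>0\}$, whence $\mathbb{P}[Y_f^{\beta,\eta}(t)>0]=1-\mathbb{E}[e^{-\lambda E_f(t)}]$.

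The main obstacle I expect is the rigorous justification of the two operator--integral interchanges — pulling the nonlocal time operator $\mathcal{D}^f_t$ and the space operator $\mathcal{D}^\beta_x$ through the $dy$-integral — together with a clean treatment of the boundary behavior at $y=0$, where the mass $\mathbb{P}(N_f(t)=0)=\mathbb{E}[e^{-\lambda E_f(t)}]$ sits as an atom of $Y_f^{\eta}(t)$ and $l_\beta(x,\cdot)$ degenerates to $\delta(x)$. Away from $x=0$ that atom contributes nothing, so the integration by parts is valid for $x>0$; making the statement clean either restricts to $x>0$ or carries the $\delta(x)$ term explicitly. Beyond these analytic technicalities the argument is routine bookkeeping with \eqref{Diff_GFCPP_expo}, \eqref{eq:thm43-2}, and \eqref{RElation_RL_C}.
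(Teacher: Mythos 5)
Your proposal is correct and follows essentially the same route as the paper: the subordination identity $P^{\beta,\eta}_f(x,t)=\int_0^\infty l_\beta(x,y)P^{\eta}_f(y,t)\,dy$, application of $\mathbb{D}^f_t$ under the integral, substitution of \eqref{Diff_GFCPP_expo}, integration by parts in $y$, conversion of $\partial_y l_\beta$ via \eqref{eq:thm43-2}, and a final appeal to \eqref{RElation_RL_C}. Your treatment of the boundary term at $y=0$ and of the side conditions is in fact more careful than the paper's.
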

		\subsection{ GTFCPP with Bernst\'ein jumps}
		\noindent  In this subsection, we assume 
		that the jump size $X_i, \; i=1,2,\ldots$ of the GFCPP are  distributed as follows
		\begin{equation}\label{Dis_Bern}
			\mathbb{P}(X_i >t)= \mathbb{E}[e^{\eta E_g(t)}].
		\end{equation}
		Note  that the above distribution also occurs as the distribution of the inter-arrivals between the consecutive jumps of the time-changed Poisson process $\{N(E_g(t))\}_{t \geq 0}$, where $\{E_g(t)\}_{t \geq 0}$ is an independent inverse subordinator with Bernst\'ein function $g$ (see \cite{FPP-IS} for more details). Now, we define the process
		\begin{equation*}
			Y_f^{g}(t):= \sum_{i=1}^{N_{f}(t)} X_i, t\geq 0.
		\end{equation*}
		It is called as the  GTFCPP with Bernst\'ein jumps. 
		The LT of the $\{Y_{f}^{g}(t)\}_{t\geq 0}$ is given by
		\begin{align}\label{LT_TSFCPP-1}
			\mathbb{E}[e^{-sY_{f}^{g}(t)}]= \mathbb{E}\left[e^{-\lambda E^{f}(t) \frac{g(s)}{g(s)+\eta}}\right],
		\end{align}
		where
		$$\mathbb{E}[e^{-sX_1}]=  \frac{\eta}{g(s)+\eta}.
		$$
		Next, we obtain the time changed representation and the governing fractional differential equation of the GTFCPP with Bernst\'ein jumps $\{Y_{f}^{g}(t)\}_{t\geq 0}$ in the following Theorem. 
		\begin{theorem}\label{TC_Diff}
			Let $\{D_{g}(t)\}_{t \geq 0}$ be a L\'evy subordinator with Bernst\'ein function $g$ and $\{Y_{f}^{\eta}(t)\}_{t \geq 0}$ be the GTFCPP with exponential jumps \eqref{Time_CFPP}, independent of $\{D_{g}(t)\}_{t \geq 0}.$ Then 
			\begin{equation}\label{relation_tscfpp-1}
				Y_{f}^{g}(t)\stackrel{d}{=}D_{g}(Y_{f}^{\eta}(t))
			\end{equation}
			The \textit{pdf} $P_{f}^{g}(x,t)$ of $\{Y_{f}^{g}(t)\}_{t\geq 0}$ satisfies the following equation
			\begin{align*}
				\eta \mathcal{D}^{f}_{t} P_{f}^{g}(x,t) &= -\left[\lambda +\mathcal{D}^{f}_{t}\right]\mathcal{D}^{g}_{x} P_{f}^{g}(x,t),\; 
			\end{align*}
			with conditions
			$$
			P_{f}^{g}(x,0)=0, \; \; 
			\mathbb{P}(Y_f^g (t) >0)= 1-\mathbb{E}[e^{-\lambda E_f(t)}].
			$$
			\begin{proof} 
				The relation \eqref{relation_tscfpp-1} can be proved  by taking the LT of $\{D_{g}(Y_{f}^{\eta}(t))\}_{t \geq 0}$, which is given by
				$$\mathbb{E}[e^{-sD_{g}(Y_{f}^{\eta}(t))}]= \mathbb{E}[e^{-\lambda E^{f}(t) \frac{^{g(s)}}{g(s)+\eta}}]$$
				This is equal to the LT given in \eqref{LT_TSFCPP-1}. Hence by the uniqueness of LT, the result follows. Now, we express the \textit{pdf} $P_{f}^{g}(x,t)$ using the conditional probability approach  and  then take a generalized R-L fractional derivative on both sides. We have that 
				\begin{align}
					\mathbb{D}^{f}_{t} P_{f}^{g}(x,t)&=  \int_{0}^{\infty} l_{g}(x,y)\mathbb{D}^{f}_{t}P_{f}^{\eta}(y,t) dy, (\text{from \eqref{Diff_GFCPP_expo}})\nonumber\\
					&=-\frac{1}{\eta}\left[\lambda +\mathcal{D}^{f}_{t}\right]\int_{0}^{\infty}l_g(x,y)\frac{\partial}{\partial y}P_{f}^{\eta}(y,t) dy\nonumber\\
					&= -\frac{1}{\eta}\left[\lambda +\mathcal{D}^{f}_{t}\right] \left(-l_{g}(x,y) P_{f}^{\eta}(y,t)|_{0}^{\infty} + \int_{0}^{\infty} \frac{\partial}{\partial y} l_g(x,y)P_{f}^{\eta}(y,t) dy\right),\label{eq:thm44}
				\end{align} where $l_{g}(x,t)$ is the density of L\'evy subordinator which satisfies the equation (see \cite{Toa-gene}) $$\mathcal{D}^{f}_{x}l_{g}(x,t)=-\frac{\partial}{\partial t}l_{g}(x,t),\;\;l_{g}(x,0)=\delta(x).$$
				We substitute the above equation in \eqref{eq:thm44} and subsequently use \eqref{RElation_RL_C} to get the desired result.  
			\end{proof}
		\end{theorem}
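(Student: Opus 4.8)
The plan is to establish the two assertions of Theorem \ref{TC_Diff} in turn, reducing both to the already-treated exponential-jump case by inserting the subordinator $D_g$.

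For the distributional identity \eqref{relation_tscfpp-1}, I would compute the Laplace transform of $D_g(Y_f^\eta(t))$ by conditioning on $Y_f^\eta(t)$: since $\mathbb{E}[e^{-sD_g(y)}] = e^{-y g(s)}$ for a subordinator with Bernst\'ein function $g$, this gives $\mathbb{E}[e^{-sD_g(Y_f^\eta(t))}] = \mathbb{E}[e^{-g(s) Y_f^\eta(t)}]$, and substituting $g(s)$ for $s$ in the Laplace transform \eqref{LT_GTFCPP} of the exponential-jump GTFCPP yields $\mathbb{E}[e^{-\lambda E_f(t) g(s)/(g(s)+\eta)}]$. This is exactly the right-hand side of \eqref{LT_TSFCPP-1}, so uniqueness of the Laplace transform gives the equality in distribution. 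This step is routine.

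For the governing equation, I would begin from the subordinated representation $P_f^g(x,t) = \int_0^\infty l_g(x,y)\, P_f^\eta(y,t)\, dy$, where $l_g(\cdot,y)$ is the density of $D_g$ at ``time'' $y$ (the density analogue of \eqref{relation_tscfpp-1}). Applying the generalized R--L derivative $\mathbb{D}_t^f$ under the $y$-integral (it acts only on $t$), and using $P_f^\eta(y,0)=0$ together with \eqref{RElation_RL_C} to identify $\mathbb{D}_t^f P_f^\eta$ with $\mathcal{D}_t^f P_f^\eta$, I can invoke \eqref{Diff_GFCPP_expo} to write $\mathcal{D}_t^f P_f^\eta(y,t) = -\tfrac{1}{\eta}[\lambda + \mathcal{D}_t^f]\tfrac{\partial}{\partial y} P_f^\eta(y,t)$. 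Factoring the $t$-operator $[\lambda+\mathcal{D}_t^f]$ out of the $y$-integral and integrating by parts in $y$ transfers the $y$-derivative onto $l_g$; the subordinator-density equation $\tfrac{\partial}{\partial y} l_g(x,y) = -\mathcal{D}_x^g l_g(x,y)$ then converts it into an $x$-derivative, and $\int_0^\infty \mathcal{D}_x^g l_g(x,y) P_f^\eta(y,t)\,dy = \mathcal{D}_x^g P_f^g(x,t)$ after pulling the $x$-operator back through the integral. Dropping the boundary terms and using \eqref{RElation_RL_C} once more (with $P_f^g(x,0)=0$) to pass from $\mathbb{D}_t^f$ to $\mathcal{D}_t^f$ on the left gives $\eta\,\mathcal{D}_t^f P_f^g(x,t) = -[\lambda+\mathcal{D}_t^f]\mathcal{D}_x^g P_f^g(x,t)$. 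The side conditions follow from $\{Y_f^g(t)=0\}=\{N_f(t)=0\}$ and $\mathbb{P}(N_f(t)=0)=\mathbb{E}[e^{-\lambda E_f(t)}]$.

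The main obstacle I anticipate is making the analytic manipulations rigorous: interchanging $\mathbb{D}_t^f$ and the nonlocal operator $[\lambda+\mathcal{D}_t^f]$ with the $y$-integral, and above all the integration by parts, whose lower boundary term is delicate because $l_g(x,y)$ degenerates to $\delta(x)$ as $y\downarrow 0$ while $Y_f^\eta(t)$ carries an atom at $0$; one must argue (or, in keeping with the style of this literature, tacitly assume) that this endpoint contribution does not survive.
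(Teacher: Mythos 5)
Your proposal follows essentially the same route as the paper's proof: the distributional identity via the Laplace transform of $D_g(Y_f^\eta(t))$ and uniqueness, and the governing equation via the subordinated representation $P_f^g(x,t)=\int_0^\infty l_g(x,y)P_f^\eta(y,t)\,dy$, the exponential-jump equation \eqref{Diff_GFCPP_expo}, integration by parts in $y$, the subordinator-density equation for $l_g$, and \eqref{RElation_RL_C}. Your version is, if anything, slightly more careful about the boundary term at $y=0$ and the operator interchanges, which the paper handles tacitly.
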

		\subsection{Generalized fractional Poisson process of order $k$}
		In this subsection, we assume that jumps 
 $X_i,\;i=1,2,\ldots,$ are iid discrete uniform random variables such that $\mathbb{P}(X_i = j) = \frac{1}{k},\; j=1,2,\ldots, k$ and $\{N_f(t)\}_{t \geq 0}$ be time-changed Poisson process with intensity rate of the Poisson process $\{N(t)\}_{t\geq 0}$ is assumed to be  $k\lambda$. The process \eqref{GFCPP} can be written as 
		\begin{equation}\label{CFPPOK}
			Y_{f}^{k}(t): = \sum_{i=1}^{N_{f}(t)} X_i, t\geq 0,
		\end{equation}
		is called the generalized fractional Poisson process of order $k$ (GFCPPoK). This process was first defined and studied in \cite{fppok-1}. \\
		The time-changed representation of GFCPPoK is given as (see \cite{fppok-1})
		$$
		Y_f^{k}(t) \stackrel{d}{=} N^k(E_f(t)), t\geq 0,
		$$
		where $\{N^k(t)\}_{t \geq 0}$ is Poisson process of order $k$ (PPoK).\\
		The \textit{pmf} $P^{k}_{f}(n,t)=\mathbb{P}[Y_f^{k}(t)=n]$ of $\{Y_{f}^{k}(t)\}_{t \geq 0}$ satisfy following fractional differential-difference equation (see \cite[Proposition 7.4]{fppok-1}),
		\begin{align*}
			\mathcal{D}^{f}_{t} P^{k}_{f}(n,t) &= -k \lambda \left(1-\frac{1}{k}\sum_{j=1}^{n \wedge k} B^{j}\right)P^{k}_{f}(n,t),\; n>0,\\
			\mathcal{D}^{f}_{t}P^{k}_{f}(0,t)  &= - k \lambda P^{k}_{f}(0,t),\nonumber
		\end{align*}
		where $B$ is the backward shift operator i.e. $B[P(n,t)]=P(n-1,t)$.

		\subsection{Generalized P\'olya-Aeppli process of order $k$ }
		Consider the GTFCPP with $X_i$'s $i=1,2,\ldots$ as iid truncated geometrically distributed random variables with success probability $1-\rho$ and \textit{pmf} given by 
		$$
		\mathbb{P}[X_i=j] = \frac{1-\rho}{1-\rho^k}\rho^{j-1},\;\; j=1,2,\ldots,k, \;\; \rho \in [0,1). 
		$$
		The LT of $ X_1$ given by 
		$$
		\mathbb{E}[e^{-sX_1}] = \frac{(1-\rho)e^{-s}}{(1-\rho^k)} \frac{1-\rho^k e^{-s^k}}{1-\rho{e^{-s}}}.
		$$
		Note that when $k \rightarrow \infty $ the truncated geometric distribution approaches the geometric distribution starting at $1$ and success probability $1-\rho$.   
		We denote the process  as
		\begin{equation}\label{tsfcpp}
			Y_{f}^{\rho,k}(t) := \sum_{i=1}^{N_{f}(t)} X_i, t\geq 0.
		\end{equation}
		This is called the generalized Pólya-Aeppli process of order $k$ (GPAPoK).
		The LT of the $\{Y_{f}^{\rho,k}(t)\}_{t \geq 0}$ is given by 
		\begin{align*}
			\mathbb{E}[e^{-sY_{f}^{\rho}(t) }]= \mathbb{E}[e^{-\lambda E^{f}(t) (1-\mathbb{E}[e^{-sX_1}])}]=\mathbb{E}\left[e^{-\lambda E^{f}(t)\left(1- \frac{(1-\rho)e^{-s}}{(1-\rho^k)} \frac{1-\rho^k e^{-s^k}}{1-\rho{e^{-s}}}\right) }\right].
		\end{align*}
	The time-changed representation of GPAPoK is 
	$$
	Y_f^{\rho,k}(t) \stackrel{d}{=}N^k_A(E_f(t)), t\geq 0,
	$$
	where $\{N^k_A(t)\}_{t\geq0}$ is P\'olya-Aeppli process of order $k$ (PAPoK) (see \cite{chukova2015polya}). Further, we derive the differential equation for the GPAPoK.
	\begin{theorem}\label{Diff_GFPAOK}
		The \textit{pmf} $P^{\rho,k}_{f}(n,t)=\mathbb{P}[Y_f^{\rho,k}(t)=n]$ satisfy following fractional differential equation
		\begin{align*}
			\mathcal{D}^{f}_{t} P^{\rho,k}_{f}(n,t) &= -\lambda \left(1-\frac{1-\rho}{1-\rho^k}\sum_{j=1}^{n \wedge k}\rho^{j-1}B^j\right)  P^{\rho,k}_{f}(n,t), \; \; n=1,2, \ldots,\\
			\mathcal{D}^{f}_{t} P^{\rho,k}_{f}(0,t) &= -\lambda P^{\rho,k}_{f}(0,t),
		\end{align*}
		with an  initial condition
		$
		P^{\rho,k}_{f}(n,0)=\delta_{n,0}.
		$
		\begin{proof} Writing the  \textit{pmf} $P^{\rho,k}_{f}(n,t)$ using the condition probability approach, we have that  
			$$
			P^{\rho,k}_{f}(n,t)=\int_{0}^{\infty} P_{\rho,k}(n,y)h_f(y,t) dy,
			$$
			where the $P_{\rho,k}(n,t)$ is the \textit{pmf} of PAPoK $\{N^k_A(t)\}_{t\geq0}$.
			Taking generalized Riemann-Liouville 
			derivative \eqref{gen_R-L} on both sides of the above equation, we get
			\begin{align}
				\mathbb{D}^{f}_{t} P^{\rho, k}_{f}(n,t)&=  \int_{0}^{\infty} P_{\rho,k}(n,y)\mathbb{D}^{f}_{t}h_f(y,t) dy,\;\;(\text{see in \cite{Toa-gene}})\nonumber \\
				&=-\int_{0}^{\infty} P_{\rho,k}(n,y)\frac{\partial}{\partial y}h_f(y,t)dy\nonumber\\
				&= - P_{\rho,k}(n,y)h_f(y,t)|_{0}^{\infty} + \int_{0}^{\infty} \frac{\partial}{\partial y} P_{\rho,k}(n,y)h_f(y,t) dy.\label{eq:thrm45}
			\end{align}
			We know that (see \cite{Minkova}) the \textit{pmf } $P_{\rho,k}(n,t)$ of  the P\'olya-Aeppli process of order $k$ $\{N^k_A(t)\}_{t\geq 0}$ satisfies the following differential equation 
			$$
			\frac{\partial}{\partial t}P_{\rho,k}(n,t)=-\lambda \left(1-\frac{1-\rho}{1-\rho^k}\sum_{j=1}^{n \wedge k}\rho^{j-1}B^j \right)P_{\rho,k}(n,t),\;\; n\geq 1.
			$$
			Substituting the above equation in \eqref{eq:thrm45}  and using \eqref{RElation_RL_C}, we obtain the desired result.
		\end{proof}
	\end{theorem}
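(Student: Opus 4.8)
The plan is to mimic the subordination argument used for the earlier special cases of the GFCPP. First I would write the \textit{pmf} of the GPAPoK as a mixture, using the time-changed representation $Y_f^{\rho,k}(t)\stackrel{d}{=}N^k_A(E_f(t))$, namely
\[
P^{\rho,k}_{f}(n,t)=\int_{0}^{\infty} P_{\rho,k}(n,y)\,h_f(y,t)\,dy,
\]
where $P_{\rho,k}(n,\cdot)$ is the \textit{pmf} of the non-fractional P\'olya--Aeppli process of order $k$ $\{N^k_A(t)\}_{t\ge0}$ and $h_f(y,t)$ is the density of the inverse subordinator $\{E_f(t)\}_{t\ge0}$.

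Next I would apply the generalized Riemann--Liouville derivative $\mathbb{D}^{f}_{t}$ of \eqref{gen_R-L} to both sides, passing it under the integral sign, and use the known governing equation $\mathbb{D}^{f}_{t}h_f(y,t)=-\frac{\partial}{\partial y}h_f(y,t)$ for the inverse-subordinator density (with $h_f(y,0)=\delta(y)$). An integration by parts in $y$ then transfers the derivative onto $P_{\rho,k}(n,y)$, leaving a boundary term $-P_{\rho,k}(n,y)h_f(y,t)\big|_{0}^{\infty}$ that must be controlled: it vanishes at infinity by decay of $h_f$, while the contribution at $y=0$ will be absorbed at the last step. At this point I would substitute the classical Minkova differential--difference equation
\[
\frac{\partial}{\partial t}P_{\rho,k}(n,t)=-\lambda\Big(1-\frac{1-\rho}{1-\rho^k}\sum_{j=1}^{n\wedge k}\rho^{j-1}B^j\Big)P_{\rho,k}(n,t),\qquad n\ge 1,
\]
(together with $\frac{\partial}{\partial t}P_{\rho,k}(0,t)=-\lambda P_{\rho,k}(0,t)$) under the integral. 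Since the operator on the right acts only on the discrete index $n$ through the backward shift $B$, it commutes with the $y$-integration, so the right-hand side collapses to $-\lambda\big(1-\tfrac{1-\rho}{1-\rho^k}\sum_{j=1}^{n\wedge k}\rho^{j-1}B^j\big)P^{\rho,k}_{f}(n,t)$.

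Finally I would pass from the Riemann--Liouville to the Caputo--Djrbashian form via \eqref{RElation_RL_C}, $\mathbb{D}^{f}_{t}u(t)=\mathcal{D}^{f}_{t}u(t)+\bar{\nu}(t)u(0)$, evaluated with the initial value $P^{\rho,k}_{f}(n,0)=\delta_{n,0}$; the correction term $\bar{\nu}(t)P^{\rho,k}_{f}(n,0)$ should match the $y=0$ boundary term produced by the integration by parts, and the two cancel, leaving exactly the stated equation. The $n=0$ case runs in parallel and yields $\mathcal{D}^{f}_{t}P^{\rho,k}_{f}(0,t)=-\lambda P^{\rho,k}_{f}(0,t)$.

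The main obstacle I expect is the careful bookkeeping at $y=0$: the interplay between the singular initial condition $h_f(y,0)=\delta(y)$, the boundary term from the integration by parts, and the $\bar{\nu}(t)u(0)$ correction when switching derivatives must be shown to combine consistently, and one must also justify differentiating under the integral and the vanishing of the boundary term at infinity. The discrete part of the argument is then routine, since the shift operator commutes with the integral.
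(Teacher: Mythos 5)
Your proposal follows the same route as the paper's proof: the subordination representation $P^{\rho,k}_{f}(n,t)=\int_{0}^{\infty}P_{\rho,k}(n,y)h_f(y,t)\,dy$, application of the generalized Riemann--Liouville derivative under the integral, integration by parts using $\mathbb{D}^{f}_{t}h_f(y,t)=-\frac{\partial}{\partial y}h_f(y,t)$, substitution of Minkova's equation for the P\'olya--Aeppli process of order $k$, and conversion to the Caputo--Djrbashian form via \eqref{RElation_RL_C}. Your treatment is if anything slightly more explicit than the paper's about the boundary-term bookkeeping at $y=0$ and the cancellation with the $\bar{\nu}(t)u(0)$ correction, which the paper leaves implicit.
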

	\subsection{Fractional negative binomial process}
	Let $X_{i}$'s $i=1,2,\ldots$ be a sequence of iid random variables with discrete logarithmic distribution, given by
	$$
	\mathbb{P}[X_i=n]= \frac{-1}{\log (1-q)}\frac{q^n}{n},\;\; n\geq 1,\; q \in(0,1).
	$$
	We denote the process as
	\begin{equation*}
		Y^{q}_{f}(t): = \sum_{i=0}^{N_f(t)}X_i, t\geq 0.
	\end{equation*}
	It is also known (see \cite{BegClau14}) as the fractional negative binomial process with parameter $(1-q, \frac{\lambda}{\log{1-q}})$.
	The LT of the $\{Y^{q}_{f}(t)\}_{t \geq 0}$ is given by
	\begin{align*}
		\mathbb{E}[e^{-sY^{q}_{\lambda}(t) }]= \mathbb{E}[e^{-\lambda E^{f}(t) (1-(s+\mu)^\alpha -\mu^\alpha))}].
	\end{align*}

	\section{Classifications based on arrivals}\label{sec:arr}
	\noindent In this section, we work out special cases of the GFCPP, defined in \eqref{GFCPP}, by taking particular cases of time-changed Poisson process $\{N_f(t)\}_{t\geq 0}$. More specifically, we study two types of inverse subordinator $\{E_f(t)\}_{t \geq 0}$, namely the inverse tempered  $\alpha$-stable subordinator (ITSS) and the inverse of the inverse Gaussian (IG) subordinator. The distribution of jumps is assumed in a general sense. Further, some results are mentioned by taking special cases of the jump distributions of $X_i,i=1,2,\ldots$.
	\subsection{Tempered fractional CPP}\label{subsec:tmfcpp}
	\begin{definition}
		Consider the inverse subordinator \eqref{inverse-sub} associated with tempered stable Bernst\'ein function \eqref{Levy_exponent} $f(s)= (\mu+s)^\alpha-\mu^\alpha,\; \alpha \in (0,1], \mu>0$, denoted by $\{E_{\alpha,\mu}(t)\}_{t\geq0}$. Let $N_{\alpha,\mu}(t):=N(E_{\alpha,\mu}(t)),t\geq 0$ be the tempered fractional Poisson process (TFPP)  (studied in \cite{tsfpp}). The process \eqref{GFCPP} is defined by 
		\begin{align}\label{TTSFCPP}
			Y_{\alpha,\mu}(t):= \sum_{i=1}^{N_{\alpha,\mu}(t)}X_i, t\geq 0,
		\end{align}
		is called tempered fractional CPP (TFCPP) with  $X_i,\;i=1,2,\ldots,$ be the iid jumps having common
		distribution $F_X $.
	\end{definition}
	\noindent The \textit{pmf} $P_{\alpha,\mu}(n,t)=\mathbb{P}[Y_{\alpha,\mu}(t)=n]$ satisfy following tempered fractional differential equation (see \cite{FPP-IS})
	\begin{align*}
		\mathcal{D}^{\alpha,\mu}_{t} P_{\alpha, \mu}(n,t) &= -\lambda  P_{\alpha,\mu}(n,t) +\lambda \int_{-\infty}^{\infty} P_{\alpha,\mu}(n-x,t) F_X(x) dx,
	\end{align*}
	where $\mathcal{D}^{\alpha, \mu}_t$ denotes the tempered C-D fractional derivative (a special case of \eqref{Gen_Caputo_FD}).
	We next give some distributional results for the TFCPP.
	\begin{theorem}
		The mean, variance, and covariance of the TFCPP $\{Y_{\alpha,\mu}(t)\}_{t\geq 0}$ is given by
		\begin{enumerate}[(i)]
			\item   $ \mathbb{E}[Y_{\alpha, \mu}(t)]  =\lambda \mathbb{E}[X_1]\sum_{n=0}^{\infty}\frac{\mu^{\alpha}\gamma(\mu t; \alpha(1+n))}{\Gamma(\alpha(1+n))};$
			
			\item ${\rm Var}[Y_{\alpha, \mu}(t)] = \lambda \mathbb{E}[X_1^2]\sum_{n=0}^{\infty}\frac{\mu^{\alpha}\gamma(\mu t; \alpha(1+n))}{\Gamma(\alpha(1+n))}+\lambda^2(\mathbb{E}[X_1])^2{\rm Var}[E_f(t)]$
			
			\item ${\rm Cov}[Y_{\alpha, \mu}(t),Y_{\alpha, \mu}(s)]  = \lambda \mathbb{E}[X_1^2]\sum_{n=0}^{\infty}\frac{\mu^{\alpha}\gamma(\mu s; \alpha(1+n))}{\Gamma(\alpha(1+n))}+\lambda^2(\mathbb{E}[X_1])^2{\rm Cov}[E_f(t),E_f(s)].$
		\end{enumerate}
		where $\gamma(a;b)$ is an incomplete gamma function. 
		\begin{proof}The results follows from Theorem \eqref{m_v_c_GFCPP} by substituting the value of $\mathbb{E}[E_{\alpha,\mu}(t)]$ (see \cite{Mijena2014})
			$$
			\mathbb{E}[E_{\alpha, \mu}(t)]=\sum_{n=0}^{\infty}\frac{\mu^{\alpha}\gamma(\mu t; \alpha(1+n))}{\Gamma(\alpha(1+n))}.\qedhere
			$$
		\end{proof}
	\end{theorem}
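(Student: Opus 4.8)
The plan is to obtain all three identities as a direct specialization of Theorem~\ref{m_v_c_GFCPP}. By construction the TFCPP \eqref{TTSFCPP} is exactly the GFCPP \eqref{GFCPP} in which the inverse subordinator $\{E_f(t)\}_{t\geq 0}$ is taken to be the inverse tempered $\alpha$-stable subordinator $\{E_{\alpha,\mu}(t)\}_{t\geq 0}$, i.e.\ the Bernst\'ein function is $f(s)=(\mu+s)^\alpha-\mu^\alpha$. Hence the general formulas of Theorem~\ref{m_v_c_GFCPP},
\[
\mathbb{E}[Y_f(t)]=\lambda\,\mathbb{E}[E_f(t)]\,\mathbb{E}[X_1],\qquad
{\rm Var}[Y_f(t)]=\lambda\,\mathbb{E}[E_f(t)]\,\mathbb{E}[X_1^2]+\lambda^2(\mathbb{E}[X_1])^2\,{\rm Var}[E_f(t)],
\]
together with the corresponding covariance expression, hold verbatim with $E_f$ everywhere replaced by $E_{\alpha,\mu}$. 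The moment hypothesis $\mathbb{E}[X_1^2]<\infty$ implicit in the statement is what guarantees these quantities are finite, so that Theorem~\ref{m_v_c_GFCPP} applies.

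The only extra ingredient is a usable expression for the first moment of the inverse tempered stable subordinator. I would invoke the known formula (see \cite{Mijena2014})
\[
\mathbb{E}[E_{\alpha,\mu}(t)]=\sum_{n=0}^{\infty}\frac{\mu^{\alpha}\gamma(\mu t;\alpha(1+n))}{\Gamma(\alpha(1+n))},
\]
where $\gamma(a;b)$ is the lower incomplete gamma function; the series converges since $\gamma(\mu t;\alpha(1+n))\leq\Gamma(\alpha(1+n))$ and, for fixed $t$, the ratio $\gamma(\mu t;\alpha(1+n))/\Gamma(\alpha(1+n))$ decays faster than any geometric rate in $n$. Substituting this expression into part~(i) of Theorem~\ref{m_v_c_GFCPP} gives (i); substituting it into the first term of the variance in part~(ii), and into the first term of the covariance in part~(iii) (with $s$ in place of $t$ there), gives (ii) and (iii). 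The residual terms ${\rm Var}[E_{\alpha,\mu}(t)]$ and ${\rm Cov}[E_{\alpha,\mu}(t),E_{\alpha,\mu}(s)]$ are simply carried over unchanged from Theorem~\ref{m_v_c_GFCPP}.

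In short, there is essentially no obstacle: the argument is ``specialize Theorem~\ref{m_v_c_GFCPP}, then plug in the Mijena formula.'' If one wished to make the statement fully explicit, the only mild additional work would be to expand ${\rm Var}[E_{\alpha,\mu}(t)]$ and ${\rm Cov}[E_{\alpha,\mu}(t),E_{\alpha,\mu}(s)]$ using the second-moment and covariance formulas for the inverse tempered stable subordinator available in the literature; but since the theorem as stated leaves these in implicit form, no such computation is required.
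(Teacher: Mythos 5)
Your proposal is correct and follows exactly the paper's own route: both specialize Theorem \ref{m_v_c_GFCPP} to the inverse tempered $\alpha$-stable subordinator and substitute the formula for $\mathbb{E}[E_{\alpha,\mu}(t)]$ from \cite{Mijena2014}. The extra remarks on convergence of the series and the finiteness of moments are harmless additions that the paper leaves implicit.
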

	\begin{corollary}
		Let $\mathbb{E}[X_i]=0, i=1,2,\ldots$, then the correlation of the process is given by 
		$$
		{\rm Corr}[Y_{\alpha, \mu}(t), Y_{\alpha, \mu}(s)]=\sqrt{\frac{\mathbb{E}[E_{\alpha, \mu}(s)]}{\mathbb{E}[E_{\alpha, \mu}(t)]}},
		$$
		$$ \mathbb{E}[E_{\alpha, \mu}(t)] \sim \frac{t}{\alpha \mu^{\alpha-1}}, \text{as}\; {t\to\infty}, \text{(see in \cite{Mijena2014})}.
		$$
		Using \eqref{LRD_definition}, we obtain the correlation function of $Y_{\alpha, \mu}(t)$ and $Y_{\alpha, \mu}(s)$. It exhibits LRD property, i.e.
		$$
		\lim\limits_{t\to\infty}\frac{{\rm Corr}[Y_{\alpha, \mu}(t), Y_{\alpha, \mu}(s)]}{t^{-1/2}}  \sim  \mu^{(\alpha-1)/2}\alpha^{1/2} \sqrt{\mathbb{E}[E_{\alpha,\mu}(s)]}.
		$$
	\end{corollary}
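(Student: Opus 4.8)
The plan is to reduce everything to the moment formulas already established for the GFCPP (Theorem \ref{m_v_c_GFCPP}), specialized to the inverse tempered stable subordinator $\{E_{\alpha,\mu}(t)\}_{t\ge 0}$, and then feed in the known asymptotics of $\mathbb{E}[E_{\alpha,\mu}(t)]$. First I would impose the hypothesis $\mathbb{E}[X_1]=0$ in parts (ii) and (iii) of Theorem \ref{m_v_c_GFCPP} (with $f(s)=(\mu+s)^\alpha-\mu^\alpha$). Every term carrying the factor $(\mathbb{E}[X_1])^2$ vanishes, so
\[
{\rm Var}[Y_{\alpha,\mu}(t)] = \lambda\,\mathbb{E}[X_1^2]\,\mathbb{E}[E_{\alpha,\mu}(t)],\qquad
{\rm Cov}[Y_{\alpha,\mu}(t),Y_{\alpha,\mu}(s)] = \lambda\,\mathbb{E}[X_1^2]\,\mathbb{E}[E_{\alpha,\mu}(s)]
\]
for $s\le t$. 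Dividing the covariance by $\sqrt{{\rm Var}[Y_{\alpha,\mu}(t)]\,{\rm Var}[Y_{\alpha,\mu}(s)]}$, the common constant $\lambda\,\mathbb{E}[X_1^2]$ cancels together with $\sqrt{\mathbb{E}[E_{\alpha,\mu}(s)]}$, leaving exactly
\[
{\rm Corr}[Y_{\alpha,\mu}(t),Y_{\alpha,\mu}(s)] = \sqrt{\frac{\mathbb{E}[E_{\alpha,\mu}(s)]}{\mathbb{E}[E_{\alpha,\mu}(t)]}},
\]
which is the first displayed identity.

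Next I would invoke the asymptotic $\mathbb{E}[E_{\alpha,\mu}(t)]\sim t/(\alpha\mu^{\alpha-1})$ as $t\to\infty$ (from \cite{Mijena2014}), so that $\sqrt{\mathbb{E}[E_{\alpha,\mu}(t)]}\sim t^{1/2}(\alpha\mu^{\alpha-1})^{-1/2}$. Substituting this into the correlation expression and multiplying by $t^{1/2}$ gives
\[
\frac{{\rm Corr}[Y_{\alpha,\mu}(t),Y_{\alpha,\mu}(s)]}{t^{-1/2}} = \sqrt{\mathbb{E}[E_{\alpha,\mu}(s)]}\cdot\frac{t^{1/2}}{\sqrt{\mathbb{E}[E_{\alpha,\mu}(t)]}} \longrightarrow \sqrt{\mathbb{E}[E_{\alpha,\mu}(s)]}\,\alpha^{1/2}\mu^{(\alpha-1)/2}
\]
as $t\to\infty$, which is the claimed LRD asymptotic. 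Since the resulting decay exponent is $d=1/2\in(0,1)$, the process $\{Y_{\alpha,\mu}(t)\}_{t\ge 0}$ has the long-range dependence property in the sense of Subsection \ref{LRD_definition}.

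There is no serious obstacle here; the argument is essentially bookkeeping. The only points that require a little care are: keeping the time ordering $s\le t$ consistent so that the covariance formula from Theorem \ref{m_v_c_GFCPP} applies in the stated form, and making sure the mean-zero assumption is used precisely where it kills the $(\mathbb{E}[X_1])^2$-terms. The one genuinely external input is the large-$t$ behaviour of $\mathbb{E}[E_{\alpha,\mu}(t)]$, which I would simply quote from \cite{Mijena2014} rather than re-derive.
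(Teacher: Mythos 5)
Your proposal is correct and follows exactly the route the paper intends: the corollary is stated without a separate proof precisely because it is the specialization of Theorem \ref{m_v_c_GFCPP} (ii)--(iii) to mean-zero jumps, where the $(\mathbb{E}[X_1])^2$ terms drop out and the constant $\lambda\mathbb{E}[X_1^2]$ cancels in the correlation ratio, followed by the quoted asymptotic $\mathbb{E}[E_{\alpha,\mu}(t)]\sim t/(\alpha\mu^{\alpha-1})$ from \cite{Mijena2014}. Your bookkeeping, including the identification of the decay exponent $d=1/2\in(0,1)$ for the LRD property, matches the paper's argument.
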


	\noindent Next, we discuss special cases for the TFCPP, where $X_i$ follows some particular type of distribution.
	\begin{spcase}
		
		When $X_i,\;i=1,2,\ldots,$ follow exponential distribution with parameter $\eta >0$. The  process  $\{Y_f^\eta(t)\}_{t \geq 0}$ \eqref{TTSFCPP} can be represented in the following notation 
		\begin{align}\label{tem_cpp_ex}
			Y_{\alpha, \mu}^\eta(t):= \sum_{i=1}^{N_{\alpha, \mu}(t)} X_i ,t\geq 0.
		\end{align}
		This process $\{Y_{\alpha, \mu}^\eta(t)\}_{t \geq 0}$ can also be written in the time-changed representation as $\{Y(N_{\alpha, \mu}(t))\}_{t \geq 0}$.
		\noindent The \textit{pdf} $P_{\alpha, \mu}^{\eta}(x,t)$ satisfies the following equation
		\begin{align*}
			\eta \mathcal{D}^{\alpha, \mu}_{t} P_{\alpha, \mu}^{\eta}(x,t) &= -\left[\lambda +\mathcal{D}^{\alpha, \mu}(t)\right]\frac{\partial}{\partial x}  P_{\alpha, \mu}^{\eta}(x,t).
		\end{align*}
		where $ \mathcal{D}^{\alpha, \mu}_{t}$ is the tempered C-D derivative of order $\alpha \in (0,1)$ with tempering parameter $\mu>0$ is defined by
		$$
		\mathcal{D}^{\alpha, \mu}_{t}g(t) =  \frac{1}{\Gamma(1-\alpha)}\frac{d}{dt}\int_{0}^{t}\frac{g(u)du}{(t-u)^{\alpha}} - \frac{g(0)}{\Gamma(1-\alpha)}\int_{t}^{\infty}e^{-\mu r}\alpha r^{-\alpha-1}dr.
		$$
		with conditions
		$$
		P_{\alpha, \mu}^{\eta}(x,0)=0, \; \; 
		\mathbb{P}(Y_{\alpha, \mu}^\eta (t) >0)= 1-e^{-\lambda t}.
		$$
	\end{spcase}
	
	\begin{remark}
		When $\mu=0$, the tempered $\alpha$-stable subordinator reduces to the $\alpha$-stable subordinator. Then the process \eqref{tem_cpp_ex} becomes the time-fractional compound Poisson process, as defined in \cite{beghin-2012}. 
		\noindent The mean and covariance are given by 
		\begin{align*}
			\mathbb{E}[Y_{\alpha}^{\eta}(t)] &=\frac{\lambda t^\alpha}{\eta \Gamma(1+\alpha)};\\
			{\rm Cov}[Y_{\alpha}^{\eta}(t),Y_{\alpha}^{\eta}(s)] &= \frac{2\lambda s^{\alpha}}{\eta^2 \Gamma(1+\alpha)} +\frac{\lambda^2}{\eta^2}{\rm Cov}[E_\alpha(t),E_\alpha(s)],\;\; s<t.
		\end{align*}
	\end{remark}
	\begin{spcase} When
		$X_i,\; i=1,2,\ldots$ are iid random variables with tempered Mittag-Leffler distribution (\cite{tem_mit}) 
		$$
		f_{\beta, \eta, \nu}(x)= \lambda e^{-\nu x}\sum_{n=0}^{\infty} (-1)^n(\lambda-\nu^\beta)^n \frac{x^{\beta(n+1)-1}}{\Gamma(\beta(n+1))}, \; \lambda >\nu^{\beta}, x>0.
		$$
		Then, process $\eqref{GFCPP}$ is defined as
		\begin{align*}
			Y_{\alpha,\mu}^{\beta, \eta, \nu}(t): = \sum_{i=1}^{N_{\alpha,\mu}(t)}X_{i},t\geq 0, 
		\end{align*}
		where $N_{\alpha,\mu}(t) = N(E_{\alpha, \mu}(t))$ is tempered fractional Poisson process with rate parameter $\lambda>0$ (see \cite{tsfpp}). It is called a  TFCPP with tempered Mittag-Leffler jumps.
		The LT of \textit{pdf} of $\{Y_{\alpha,\mu}^{\beta, \eta, \nu}(t)\}_{t \geq 0}$ is given by 
		\begin{align*}
			\mathbb{E}[e^{-sY_{\alpha,\mu}^{\beta, \eta, \nu}(t) }]= \mathbb{E}[e^{-\lambda E_{\alpha, \mu}(t)\frac{(s+\nu)^\beta-\nu^\beta}{\eta+(s+\nu)^\beta-\nu^\beta}}],
		\end{align*}
		where $$
		\mathbb{E}[e^{-sX_1}]= \frac{\eta}{\eta+(s+\nu)^\beta-\nu^\beta}.
		$$
		An alternate representation of $\{Y_{\alpha,\mu}^{\beta, \eta, \nu}(t)\}_{ t \geq 0}$ is given by  time-changing the tempered $\beta$-stable subordinator $\{D_{\beta, \nu}(t)\}_{t \geq 0}, \; \beta \in (0,1]$ with $\{Y_{\alpha, \mu}^{\eta}(t)\}_{t \geq 0}$, i.e. 
		$$
		Y_{\alpha,\mu}^{\beta, \eta, \nu}(t) \stackrel{d}{=} D_{\beta, \nu}(Y_{\alpha, \mu}^{\eta}(t)).
		$$
		The \textit{pdf} $P_{\alpha,\mu}^{\beta, \eta, \nu}(x,t)$ satisfies the following fractional differential equation
		\begin{align*}
			\eta \mathcal{D}^{\alpha, \mu}_{t} P_{\alpha,\mu}^{\beta, \eta, \nu}(x,t) &= -\left[\lambda +\mathcal{D}^{\alpha, \mu}_{t}\right]\mathcal{D}^{\beta, \nu}_{x} P_{\alpha,\mu}^{\beta, \eta, \nu}(x,t)\; 
		\end{align*}
		with initial condition
		$$
		P_{\alpha,\mu}^{\beta, \eta, \nu}(x,0)=0, \; \; 
		\mathbb{P}(Y_{\alpha,\mu}^{\beta, \eta, \nu} (t) >0)= 1-\mathbb{E}[e^{-\lambda E_{\alpha, \mu}(t)}].
		$$
	\end{spcase}

	\begin{remark} Here, we discuss the particular values of parameter of the above-introduced processes.
		\begin{itemize}
			\item For $\nu=0$, then the process  is called time-changed  $\beta$-stable process, i.e. $\{D_\beta(Y_{\alpha, \mu}^{\eta}(t))\}_{t \geq 0}$.
			\item  For $\mu=0$, the process behaves as time-changed tempered $\beta$-stable subordinator, i.e. $\{D_{\beta, \nu}(Y_{\alpha}^{\eta}(t))\}_{t \geq 0}$ 
			\item  When $\mu=0,\; \nu=0$, then the process behaves as time-changed in $\beta$-stable subordinator $\{D_{\beta}(t)\}_{t \geq 0}$ with $\{Y_{\alpha}^{\eta}(t)\}_{t \geq 0}$, i.e. $\{D_{\beta}(Y_{\alpha}^{\eta}(t))\}_{t \geq 0}$ (see \cite{beghin-2012}).
		\end{itemize}
	\end{remark}
	\begin{spcase}
		Let $X_i, i=1,2,\ldots,$ are as distributed in \eqref{Dis_Bern}. Then the process 
		\begin{equation*}
			Y_{\alpha,\mu}^{g}(t) := \sum_{i=1}^{N_{\alpha,\mu}(t)} X_i,t\geq 0,
		\end{equation*}
		is called TFCPP with Bernst\'ein jumps.
		\noindent The subsequent result follow from Theorem \eqref{TC_Diff} as a particular case. This process has a time-changed representation, i.e.
		\begin{equation*}
			Y_{\alpha,\mu}^{g}(t) \stackrel{d}{=}D_{g}(Y_{\alpha,\mu}^{\eta}(t)).
		\end{equation*}
		The \textit{pdf} $P_{\alpha,\mu}^{g}(x,t)$ of $\{Y_{\alpha,\mu}^{g}(t)\}_{t \geq 0}$ satisfies the following equation
		\begin{align*}
			\eta \mathcal{D}^{\alpha,\mu}_{t} P_{\alpha,\mu}^{g}(x,t) &= -\left[\lambda +\mathcal{D}^{\alpha,\mu}_{t}\right]\mathcal{D}^{g}_{x} P_{\alpha,\mu}^{g}(x,t)\; 
		\end{align*}
		with conditions
		$$
		P_{\alpha,\mu}^{g}(x,0)=0, \; \; 
		\mathbb{P}(Y_{\alpha,\mu}^g (t) >0)= 1-\mathbb{E}[e^{-\lambda E_{\alpha,\mu}(t)}].
		$$
	\end{spcase}

	\begin{spcase}
		Let $X_i,i=1,2,\ldots $ be iid truncated geometrically distributed random variables. Using \eqref{tsfcpp}, we define the process
		$$
		Y_{\alpha,\mu}^{\rho,k}(t): = \sum_{i=1}^{N_{\alpha,\mu}(t)} X_i,t\geq 0.
		$$
		It  is called as the  tempered fractional PAPoK.
		Next, we mentioned the particular case of the  Theorem \eqref{Diff_GFPAOK} for the tempered fractional PAPoK. The \textit{pmf} $P^{\rho,k}_{\alpha,\mu}(n,t)=\mathbb{P}[Y_{\alpha,\mu}^{\rho,k}(t)=n]$ satisfy following fractional differential equation
		\begin{align*}
			\mathcal{D}^{\alpha,\mu}_{t} P^{\rho,k}_{\alpha,\mu}(n,t) &= -\lambda \left(1-\frac{1-\rho}{1-\rho^k}\sum_{j=1}^{n \wedge k}\rho^{j-1}B^j\right)  P^{\rho,k}_{\alpha,\mu}(n,t),\\
			\mathcal{D}^{\alpha,\mu}_{t} P^{\rho,k}_{\alpha,\mu}(0,t) &= -\lambda P^{\rho,k}_{\alpha,\mu}(0,t).
		\end{align*}
		\noindent The time-changed representation of $ \{Y_{\alpha,\mu}^{\rho,k}(t)\}_{t \geq 0}$ by time-changing in PAPoK $\{N^{k}_A(t)\}_{t \geq 0}$ with $\{E_{\alpha,\mu}(t)\}_{t \geq 0}$ such that
		$$
		Y_{\alpha,\mu}^{\rho,k}(t) \stackrel{d}{=}N^k_A(E_{\alpha,\mu}(t)) ,t\geq 0.
		$$
	\end{spcase}
	\begin{remark}
		When $\mu=0$, then $\{Y_{\alpha, 0}^{\rho,k}(t)\}_{t \geq 0}$ is called fractional FPAPok, defined in \cite{kadankova2023fractional}.
	\end{remark}
	\begin{spcase}
		Let $X_i,\;i=1,2,\ldots,$ be the discrete uniform distributed random variables. From equation \eqref{CFPPOK}, reduces the tempered fractional PPoK $\{Y_{\alpha, \mu}^k(t)\}_{t \geq 0}$, which is introduced and studied in \cite{fppok-1}.
	\end{spcase}
	\subsection{Inverse IG fractional CPP}\label{subsec:iig}
	\begin{definition}
		Consider the inverse subordinator \eqref{inverse-sub} associated with inverse Gaussian  Bernst\'ein function \eqref{Levy_exponent} $f(s)=\delta(\sqrt{2s+\gamma^2}-\gamma)$, denoted by $\{E_{\delta,\gamma}(t)\}_{t\geq 0}$. Let $N_{\delta,\gamma}(t):=N(E_{\delta,\gamma}(t)),t\geq 0$ be the inverse IG process (see \cite{Kumar-Hitting}). The process \eqref{GFCPP} defined by 
		\begin{align}\label{def:IG-FCPP}
			Y_{\delta,\gamma}(t):= \sum_{i=1}^{N_{\delta,\gamma}(t)}X_i,t\geq 0,
		\end{align}
		is called as the inverse IG fractional CPP with  $X_i,\;i=1,2,\ldots,$ be the iid jumps having common distribution $F_X $.
	\end{definition}
	\noindent Let $\{D_{\delta,\gamma}(t)\}_{t \geq 0}$ be IG L\'evy process with the LT (see \cite{ContTan2004})
	\begin{equation*}
		\mathbb{E}(e^{-s D_{\delta,\gamma}(t)}) = e^{-t \delta(\sqrt{2s+\gamma^2}-\gamma)}.
	\end{equation*}
	The L\'evy measure $\nu_{\delta,\gamma}$ corresponding to the inverse Gaussian  subordinator is given by (see \cite{ContTan2004})
	$$
	\nu_{\delta,\gamma}(dx)= \frac{\delta}{\sqrt{2\pi x^3}}e^{-\gamma^2x/2}\mathbb{I}_{\{x>0\}}dx.
	$$
	\noindent Further, we define convolution type fractional derivative or non-local operator corresponding to the inverse of IG subordinator, we have 
	\begin{align*}
		\Bar{\nu}_{\delta,\gamma}(s)  = \nu_{\delta,\gamma}(s,\infty) &= \int_{s}^{\infty}\frac{\delta}{\sqrt{2\pi x^3}}e^{-\gamma^2x/2}du, \; s>0,\nonumber\\
		&= \sqrt{\frac{2}{\pi}}\delta s^{-1/2}e^{-\gamma^2s/2}-\frac{\delta \gamma}{\sqrt{\pi}}\Gamma{\left(1/2;\gamma^2s/2\right)},
	\end{align*}
	where $\Gamma(a;b)= \int_{b}^{\infty} u^{a-1}e^{-u}dt$ is the upper incomplete gamma function.
	Using \eqref{Gen_Caputo_FD}, the generalized C-D fractional derivative corresponding to the IG subordinator is of the form 
	\begin{equation}\label{caputo_IG}
		\mathcal{D}^{\delta, \gamma}_t V(t)= \frac{d}{dt}\int_{0}^{t} \ v(s)\left( \sqrt{\frac{2}{\pi}}\delta (t-s)^{-1/2}e^{-\gamma^2s/2}-\frac{\delta \gamma}{\sqrt{\pi}}\Gamma{\left(1/2,\gamma^2(t-s)/2\right)}\right)ds- v(0)\Bar{\nu}_{\delta,\gamma}(t).
	\end{equation}
	The generalized R-L
	derivative corresponding to the inverse of IG subordinator is
	\begin{equation}\label{RL_IG}
		\mathbb{D}^{\delta, \gamma}_t v(t)= 
		\frac{d}{dt}\int_{0}^{t} \ v(s)\left( \sqrt{\frac{2}{\pi}}\delta (t-s)^{-1/2}e^{-\gamma^2(t-s)/2}-\frac{\delta \gamma}{\sqrt{\pi}}\Gamma{\left(1/2,\gamma^2(t-s)/2\right)}\right)ds.
	\end{equation}
	
	\begin{theorem}
		The \textit{pdf}  $h_{\delta,\gamma}(x,t)$ of $\{E_{\delta,\gamma}(t)\}_{t \geq 0}$ solves the following fractional differential equation
		\begin{align}\label{DDE_IIGS}
			\mathbb{D}^{\delta, \gamma}_{t}h_{\delta,\gamma}(x,t) &= -\frac{\partial}{\partial x}h_{\delta,\gamma}(x,t), x>0,
			\shortintertext{ with initial condition}
			h_{\delta,\gamma}(x,0) & =\delta(x),~~
			h_{\delta,\gamma}(0,t)  =\nu_G(t). \nonumber
		\end{align}
	\end{theorem}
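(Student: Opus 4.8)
The plan is to verify equation \eqref{DDE_IIGS} through the Laplace transform in the time variable $t$, using the explicit transform of the density of an inverse subordinator together with the transform rule for the generalized R--L derivative $\mathbb{D}^{\delta,\gamma}_t$, and then to read off the two side conditions from that same transform.

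First, fix $x>0$. From the definition \eqref{inverse-sub} one has the standard identity $\mathbb{P}(E_{\delta,\gamma}(t)\le x)=\mathbb{P}(D_{\delta,\gamma}(x)\ge t)$. Taking the $t$-Laplace transform and using $\mathbb{E}[e^{-sD_{\delta,\gamma}(x)}]=e^{-xf(s)}$ with $f(s)=\delta(\sqrt{2s+\gamma^2}-\gamma)$, this gives $\int_0^\infty e^{-st}\mathbb{P}(E_{\delta,\gamma}(t)\le x)\,dt=(1-e^{-xf(s)})/s$; differentiating in $x$ under the integral (justified by Fubini and dominated convergence) yields the key formula
\[
\widetilde h_{\delta,\gamma}(x,s):=\int_0^\infty e^{-st}h_{\delta,\gamma}(x,t)\,dt=\frac{f(s)}{s}\,e^{-xf(s)},\qquad x>0.
\]
Next, integrating the L\'evy--Khintchine integral for $f$ by parts (there is no drift, so $b=0$) shows that the tail $\bar\nu_{\delta,\gamma}$ of the IG L\'evy measure satisfies $\int_0^\infty e^{-st}\bar\nu_{\delta,\gamma}(t)\,dt=f(s)/s$. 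Combining this with the convolution form \eqref{gen_R-L} of $\mathbb{D}^{\delta,\gamma}_t$ (equivalently \eqref{RL_IG}) and the fact that $h_{\delta,\gamma}(x,\cdot)\ast\bar\nu_{\delta,\gamma}$ vanishes at $t=0$, one obtains the transform rule $\mathcal{L}_t\big[\mathbb{D}^{\delta,\gamma}_t h_{\delta,\gamma}(x,\cdot)\big](s)=f(s)\,\widetilde h_{\delta,\gamma}(x,s)$.

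Now apply $\mathcal{L}_t$ to both sides of \eqref{DDE_IIGS}. The left side becomes $f(s)\cdot\frac{f(s)}{s}e^{-xf(s)}=\frac{f(s)^2}{s}e^{-xf(s)}$, while the right side becomes $-\partial_x\big(\frac{f(s)}{s}e^{-xf(s)}\big)=\frac{f(s)^2}{s}e^{-xf(s)}$; the two expressions coincide for every $x>0$, so by uniqueness of the Laplace transform the differential equation holds. The initial condition $h_{\delta,\gamma}(x,0)=\delta(x)$ is simply the statement $E_{\delta,\gamma}(0)=0$ a.s.\ (equivalently, $\widetilde h_{\delta,\gamma}(x,s)\to 0$ pointwise in $s$ as $x\to\infty$ and integrates to $1/s$ over $x>0$), and the boundary value $h_{\delta,\gamma}(0^+,t)=\nu_G(t)$ follows by letting $x\to0^+$ in $\widetilde h_{\delta,\gamma}(x,s)=f(s)/s$ and inverting, since $f(s)/s$ is precisely the Laplace transform of $\bar\nu_{\delta,\gamma}=\nu_G$.

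The routine but genuinely delicate point is the transform rule for $\mathbb{D}^{\delta,\gamma}_t$: it requires $t\mapsto h_{\delta,\gamma}(x,t)$ to be absolutely continuous and the convolution $h_{\delta,\gamma}(x,\cdot)\ast\bar\nu_{\delta,\gamma}$ to be differentiable with value $0$ at the origin, which is exactly the function class on which the generalized R--L derivative \eqref{gen_R-L} is defined. This regularity---and with it the whole argument---is supplied by the general theory of \cite{Toa-gene}, of which the present statement is the specialization to the Bernst\'ein function $f(s)=\delta(\sqrt{2s+\gamma^2}-\gamma)$; one may alternatively simply quote that reference and substitute this $f$, computing $\bar\nu_{\delta,\gamma}$ and $\mathbb{D}^{\delta,\gamma}_t$ as in \eqref{RL_IG}. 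It should also be noted that \eqref{DDE_IIGS} is understood in the $x>0$ sense, the point mass at the origin being carried entirely by the initial datum.
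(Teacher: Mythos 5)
Your argument is correct. Like the paper's, it is a Laplace-transform verification, and both hinge on the same key identity $\mathcal{L}_t\big[\mathbb{D}^{\delta,\gamma}_t v\big](s)=f(s)\,\mathcal{L}_t[v](s)$ with $f(s)=\delta(\sqrt{2s+\gamma^2}-\gamma)$ --- which the paper establishes by explicitly transforming the two terms of $\bar\nu_{\delta,\gamma}$ appearing in \eqref{RL_IG}, and which you obtain from the general tail identity $\mathcal{L}_t[\bar\nu_{\delta,\gamma}](s)=f(s)/s$. The routes then diverge. The paper takes a further Laplace transform in $x$, feeds in the boundary value $h_{\delta,\gamma}(0,t)=\bar\nu_{\delta,\gamma}(t)$ as a datum, solves the resulting algebraic equation for the double transform $\frac{f(s)}{s\left(y+f(s)\right)}$, and matches it against the known Laplace--Laplace transform of $h_{\delta,\gamma}$ from \cite{Kumar-Hitting}. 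You instead compute the single transform $\widetilde h_{\delta,\gamma}(x,s)=\frac{f(s)}{s}e^{-xf(s)}$ from first principles via $\mathbb{P}(E_{\delta,\gamma}(t)\le x)=\mathbb{P}(D_{\delta,\gamma}(x)\ge t)$, verify the equation pointwise in $x>0$ after transforming only in $t$, and recover the boundary condition as an output by letting $x\to 0^{+}$. Your version is more self-contained --- it needs no external reference formula for the double transform and derives rather than assumes the boundary value --- at the cost of justifying the differentiation under the integral sign and the regularity needed for the transform rule (which you correctly delegate to \cite{Toa-gene}); the paper's version is shorter once the formula from \cite{Kumar-Hitting} is taken as known. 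Both are valid proofs of the statement.
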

	\begin{proof}
		Taking LT on both sides of \eqref{RL_IG}, it yields
		\begin{align*}
			\mathcal{L}_{t}\{\mathbb{D}^{\delta, \gamma}_t v(t)\}&= s \mathcal{L}_{t}(v(t))\left[\mathcal{L}_{t}\left(\sqrt{\frac{2}{\pi}}\delta (t)^{-1/2}e^{-\gamma^2t/2}\right)- \mathcal{L}_{t}\left(\frac{\delta \gamma}{\sqrt{\pi}}\Gamma{\left(1/2,\gamma^2t/2\right)}\right)\right]\nonumber\\
			& =  \delta s \mathcal{L}_{t}(v(t))\left[\frac{2}{\sqrt{(2s+\gamma^2)}}-\frac{\gamma}{s}\frac{\sqrt{(2s+\gamma^2)}-\gamma}{\sqrt{(2s+\gamma^2)}}\right]\nonumber\\
			& = \mathcal{L}_{t}(v(t)) \delta(\sqrt{2s+\gamma^2}-\gamma).
		\end{align*}
		\noindent Now,  applying  the LT with respect to $x$ on the both sides of \eqref{DDE_IIGS}, we get 
		$$
		\mathbb{D}^{\delta, \gamma}_{t}\mathcal{L}_{x}[h_{\delta,\gamma}(x,t)](y)= -y\mathcal{L}_{x}[h_{\delta,\gamma}(x,t)](y)-h_{\delta,\gamma}(0,t).
		$$
		Again, taking the LT with respect to $t$, we have that
		$$
		\delta(\sqrt{2s+\gamma^2}-\gamma)  \mathcal{L}_{t}[\mathcal{L}_{x}[h_{\delta,\gamma}(x,t)](y)](s)  = - y \mathcal{L}_{t}[\mathcal{L}_{x}[h_{\delta,\gamma}(x,t)](y)](s)+\frac{\delta(\sqrt{2s+\gamma^2}-\gamma)}{s}.
		$$
		We obtain
		$$
		\mathcal{L}_{t}[\mathcal{L}_{x}[h_{\delta,\gamma}(x,t)](y)](s)= \frac{\delta(\sqrt{2s+\gamma^2}-\gamma)}{s\left( y+\delta(\sqrt{2s+\gamma^2}-\gamma)\right)},
		$$
		which is the Laplace–Laplace transform of the \textit{pdf} $h_{\delta,\gamma}(x,t)$ (see \cite[Remark 2.1]{Kumar-Hitting}). This completes the proof.
	\end{proof}

		\begin{theorem}
			The \textit{pmf} $P_{\delta,\gamma}(n,t)=\mathbb{P}[Y_{\delta,\gamma}(t)=n]$ satisfy following fractional differential equation
			\begin{align*}
				\mathcal{D}^{\delta, \gamma}_{t} P_{\delta, \gamma}(n,t) &= -\lambda  P_{\delta, \gamma}(n,t) +\lambda \int_{-\infty}^{\infty} P_{\delta, \gamma}(n-x,t) F_X(x) dx.
			\end{align*}
		\end{theorem}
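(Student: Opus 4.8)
The plan is to repeat, verbatim in structure, the argument proving Theorem~\ref{Pro_DDE_GFCPP}, now with the Bernst\'ein function specialized to the inverse Gaussian one, $f(s)=\delta(\sqrt{2s+\gamma^2}-\gamma)$. First I would record that, by the time-changed representation noted right after \eqref{LT:GCFPP}, $Y_{\delta,\gamma}(t)\stackrel{d}{=}Y(E_{\delta,\gamma}(t))$, so conditioning on $E_{\delta,\gamma}(t)$ gives
$$P_{\delta,\gamma}(n,t)=\int_{0}^{\infty}P(n,x)\,h_{\delta,\gamma}(x,t)\,dx,$$
where $P(n,\cdot)$ is the \textit{pmf} of the CPP $\{Y(t)\}_{t\ge0}$ and $h_{\delta,\gamma}(x,t)$ is the \textit{pdf} of the inverse IG process $\{E_{\delta,\gamma}(t)\}_{t\ge0}$.

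Next I would apply the generalized R-L derivative $\mathbb{D}^{\delta,\gamma}_{t}$ of \eqref{RL_IG} to both sides and take it inside the $x$-integral, so that it acts only on $h_{\delta,\gamma}(x,t)$. Invoking the preceding theorem, $\mathbb{D}^{\delta,\gamma}_{t}h_{\delta,\gamma}(x,t)=-\tfrac{\partial}{\partial x}h_{\delta,\gamma}(x,t)$ by \eqref{DDE_IIGS}, so
$$\mathbb{D}^{\delta,\gamma}_{t}P_{\delta,\gamma}(n,t)=-\int_{0}^{\infty}P(n,x)\,\frac{\partial}{\partial x}h_{\delta,\gamma}(x,t)\,dx.$$
Integrating by parts in $x$ yields $-P(n,x)h_{\delta,\gamma}(x,t)\big|_{0}^{\infty}+\int_{0}^{\infty}\tfrac{\partial}{\partial x}P(n,x)\,h_{\delta,\gamma}(x,t)\,dx$; the term at $x=\infty$ vanishes, while at $x=0$ it equals $P(n,0)\,h_{\delta,\gamma}(0,t)=\delta_{n,0}\,\bar{\nu}_{\delta,\gamma}(t)$ because $h_{\delta,\gamma}(0,t)=\nu_G(t)=\bar{\nu}_{\delta,\gamma}(t)$ and the CPP starts at the origin, $P(n,0)=\delta_{n,0}$.

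Then I would substitute the governing equation \eqref{diff-cpp} of the CPP \textit{pmf}, namely $\tfrac{\partial}{\partial x}P(n,x)=-\lambda P(n,x)+\lambda\int_{-\infty}^{\infty}P(n-u,x)F_X(u)\,du$, into the surviving integral, interchange the $x$- and $u$-integrals by Fubini, and recognize the outcome as $\delta_{n,0}\bar{\nu}_{\delta,\gamma}(t)-\lambda P_{\delta,\gamma}(n,t)+\lambda\int_{-\infty}^{\infty}P_{\delta,\gamma}(n-u,t)F_X(u)\,du$. Finally, applying the R-L/C-D relation \eqref{RElation_RL_C} for the inverse IG operator, $\mathbb{D}^{\delta,\gamma}_{t}P_{\delta,\gamma}(n,t)=\mathcal{D}^{\delta,\gamma}_{t}P_{\delta,\gamma}(n,t)+\bar{\nu}_{\delta,\gamma}(t)P_{\delta,\gamma}(n,0)$ with $P_{\delta,\gamma}(n,0)=\delta_{n,0}$, cancels the boundary contribution $\delta_{n,0}\bar{\nu}_{\delta,\gamma}(t)$ and leaves precisely the claimed identity for $\mathcal{D}^{\delta,\gamma}_{t}P_{\delta,\gamma}(n,t)$ (with $\mathcal{D}^{\delta,\gamma}_{t}$ the operator \eqref{caputo_IG}).

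The only real obstacle is technical rather than conceptual: one must justify pushing $\mathbb{D}^{\delta,\gamma}_{t}$ through the $x$-integral and the subsequent integration by parts, which needs integrability and decay control on $h_{\delta,\gamma}(x,t)$ and $\tfrac{\partial}{\partial x}h_{\delta,\gamma}(x,t)$ near $x=0$ (where $h_{\delta,\gamma}(0,t)=\nu_G(t)$ is finite for $t>0$) and as $x\to\infty$. These are exactly the estimates already used implicitly in the proof of Theorem~\ref{Pro_DDE_GFCPP} and available from the preceding theorem on the inverse IG density, so no new machinery is required.
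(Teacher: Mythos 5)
Your proposal is correct and follows exactly the route the paper intends: the paper omits this proof with the remark that it is ``similar to the proof of Theorem \ref{Pro_DDE_GFCPP}'', and your argument is precisely that proof specialized to the inverse IG Bernst\'ein function, using the governing equation \eqref{DDE_IIGS} for $h_{\delta,\gamma}$, integration by parts, the CPP equation, and the relation \eqref{RElation_RL_C}. You in fact supply more detail than the paper (the boundary term $\delta_{n,0}\bar{\nu}_{\delta,\gamma}(t)$ and its cancellation), which is a welcome addition rather than a deviation.
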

		\begin{proof}
			The proof is similar to the proof of Theorem \ref{Pro_DDE_GFCPP} and hence it is omitted here.
		\end{proof}
		
		\begin{spcase}
			Let $X_i,\;i=1,2,\ldots,$ are exponentially distributed with parameter $\eta$ in \eqref{def:IG-FCPP}. Then, the process $\{Y_f^\eta(t)\}_{t \geq 0}$ can be written as 
			\begin{align}\label{IIG_cpp_ex}
				Y_{\delta, \gamma}^\eta(t):= \sum_{i=1}^{N_{\delta, \gamma}(t)} X_i,~t\geq 0.
			\end{align}
			This process $\{Y_{\delta, \gamma}^\eta(t)\}_{t \geq 0}$ can also represented as $\{Y(E_{\delta, \gamma}(t))\}_{t \geq 0}$.
			\noindent The \textit{pdf} $P_{\delta, \gamma}^{\eta}(x,t)$ satisfies the following equation
			\begin{align*}
				\eta \mathcal{D}^{\delta, \gamma}_{t} P_{\delta, \gamma}^{\eta}(x,t)(x,t) &= -\left[\lambda +\mathcal{D}^{\delta, \gamma}(t)\right]\frac{\partial}{\partial x}  P_{\delta, \gamma}^{\eta}(x,t),
			\end{align*}
			where $ \mathcal{D}^{\delta, \gamma}_{t}$ fractional derivative \eqref{caputo_IG}, with initial condition
			$$
			P_{\delta, \gamma}^{\eta}(x,0)=0, \; \; 
			\mathbb{P}(Y_{\delta, \gamma}^\eta (t) >0)= 1-e^{-\lambda t}.
			$$
		\end{spcase}
		
		\begin{spcase}
			Let $X_i,i=1,2,\ldots$ be Mittag-Leffler distributed random variables with parameter $0<\beta<1$ and $ \eta>0$. Then, we define new process $\{Y_{\delta, \gamma}^{\beta, \eta}(t)\}_{ t\geq 0}$ such as
			$$
			Y_{\delta, \gamma}^{\beta, \eta}(t):= \sum_{i=1}^{N_{\delta,\gamma}(t)}X_{i},~t\geq 0.
			$$
			\noindent The LT of  $\{Y_{\delta, \gamma}^{\beta, \eta}(t)\}_{t \geq 0}$ is given by 
			\begin{align*}
				\mathbb{E}[e^{-sY_{\delta, \gamma}^{\beta, \eta}(t) }]= \mathbb{E}\left[e^{-\lambda E_{\delta, \gamma}(t)\frac{s^{\beta}}{\eta+s^{\beta}}}\right].
			\end{align*}
			The process $Y_{\delta, \gamma}^{\beta, \eta}(t)$ can be represented in terms of the $\beta$-stable subordinator, denoted by $D_{\beta}(t)$, time-changed with independent $\{Y_{\delta, \gamma}^{\eta}(t)\}_{t\geq0}$ \eqref{IIG_cpp_ex}, i.e. 
			$$
			Y_{\delta, \gamma}^{\beta, \eta}(t) \stackrel{d}{=} D_{\beta}(Y_{\delta, \gamma}^{\eta}(t)),~t\geq 0.
			$$
			The \textit{pdf} $P_{\delta, \gamma}^{\beta, \eta}(x,t)$ satisfies the following fractional differential equation
			\begin{align*}
				\eta \mathcal{D}^{\delta, \gamma}_{t} P_{\delta, \gamma}^{\beta, \eta}(x,t) &= -\left[\lambda +\mathcal{D}^{\delta, \gamma}_{t}\right]\mathcal{D}^{\beta}_{x} P_{\delta, \gamma}^{\beta, \eta}(x,t)\; 
			\end{align*}
			with initial conditions
			$$
			P_{\delta, \gamma}^{\beta, \eta}(x,0)=0, \; \; 
			\mathbb{P}(Y_{\delta, \gamma}^{\beta, \eta}(t) >0)= 1-\mathbb{E}[e^{-\lambda E_{\delta, \gamma}(t)}].
			$$
		\end{spcase}
		\begin{spcase} Let $X_i,i=1,2,\ldots$ are iid random variables with LT
			$$
			\mathbb{E}[e^{-sX_1}]=  \frac{\eta}{ \theta(\sqrt{2s+\chi^2}-\chi)+\eta}.
			$$
			Note that the distribution of $X_i$'s coincides with the distribution of inter-arrival times of inverse IG subordinated Poisson renewal process. The new stochastic process $\{Y_{\delta, \gamma}^{\theta, \chi}(t)\}_{t \geq 0}$ can be  defined as 
			\begin{align*}
				Y_{\delta, \gamma}^{\theta, \chi}(t): = \sum_{i=1}^{N_{\delta,\gamma}(t)}X_{i},~t\geq 0.
			\end{align*}
			The LT of \textit{pdf} of $\{Y_{\delta, \gamma}^{\theta, \chi}(t)\}_{t \geq 0}$ is given by 
			\begin{align*}
				\mathbb{E}[e^{-sY_{\delta, \gamma}^{\theta, \chi}(t) }]= \mathbb{E}\left[e^{-\lambda E_{\delta, \gamma}(t)\frac{\theta(\sqrt{2s+\chi^2}-\chi)}{\eta+\theta(\sqrt{2s+\chi^2}-\chi)}}\right].
			\end{align*}
			
			\noindent The process $\{Y_{\delta, \gamma}^{\theta, \chi}(t)\}_{t \geq 0}$ can be represented in terms of the IG subordinator, denoted by $\{D_{\theta, \chi}(t)\}_{t \geq 0}$, time-changed with independent $\{Y_{\delta, \gamma}^{\eta}(t)\}_{t\geq0}$ \eqref{IIG_cpp_ex}, i.e. 
			$$
			Y_{\delta, \gamma}^{\theta, \chi}(t) \stackrel{d}{=} D_{\theta, \chi}(Y_{\delta, \gamma}^{\eta}(t)),~t\geq 0.
			$$
			The \textit{pdf} $P_{\delta, \gamma}^{\theta, \eta, \chi}(x,t)$ satisfies the following fractional differential equation
			\begin{align*}
				\eta \mathcal{D}^{\delta, \gamma}_{t} P_{\delta, \gamma}^{\theta, \eta, \chi}(x,t) &= -\left[\lambda +\mathcal{D}^{\delta, \gamma}_{t}\right]\mathcal{D}^{\theta, \chi}_{x} P_{\delta, \gamma}^{\theta, \eta, \chi}(x,t)\; 
			\end{align*}
			with initial condition
			$$
			P_{\delta, \gamma}^{\theta, \eta, \chi}(x,0)=0, \; \; 
			\mathbb{P}(Y_{\delta, \gamma}^{\theta, \chi}(t) >0)= 1-\mathbb{E}[e^{-\lambda E_{\delta, \gamma}(t)}].
			$$
			
			\ifx 	\begin{spcase}
				Let $X_i, i=1,2,\cdot,$ are distributed as mentioned in \eqref{Dis_Bern}. Then the process $Y_{\delta, \gamma}^{g}$ is defined from the equation \eqref{Time_CFPP_Bern},
				\begin{equation}\label{Time_IIGFPP_Bern}
					Y_{\delta, \gamma}^{g}(t) = \sum_{i=1}^{N_{\delta, \gamma}(t)} X_i,
				\end{equation}
				is called inverse IG CPP with Bernst\'ein jumps.
			\end{spcase}
			\noindent These are the following results from Theorem \eqref{TC_Diff} in the subsection\eqref{GTFCPP_Bern}. This process has a time-changed representation, i.e.
			\begin{equation}
				Y_{\delta, \gamma}^{g}(t) \stackrel{d}{=}D_{g}(Y_{\delta, \gamma}^{\eta}(t)).
			\end{equation}
			The density $P_{\delta, \gamma}^{g}(x,t)$ satisfies the following equation
			\begin{align}
				\eta \mathcal{D}^{\delta, \gamma}_{t} P_{\delta, \gamma}^{g}(x,t) &= -\left[\lambda +\mathcal{D}^{\delta, \gamma}_{t}\right]\mathcal{D}^{g}_{x} P_{\delta, \gamma}^{g}(x,t)\; 
			\end{align}
			with conditions
			$$
			P_{\delta, \gamma}^{g}(x,0)=0, \; \; 
			\mathbb{P}(Y_{\delta, \gamma}^g (t) >0)= 1-\mathbb{E}[e^{-\lambda E_{\delta, \gamma}(t)}].
			$$\fi 
		\end{spcase}
\begin{spcase}
			Let $X_i,\;i=1,2,\ldots,$ be the iid discrete uniform random variables and $\{N_{\delta, \gamma}(t)\}_{ t \geq 0}$ be the time-changed Poisson process with inverse IG subordinator. Then, the process \eqref{CFPPOK} is  defined,
			\begin{equation*}
				Y_{\delta, \gamma}^{k}(t) := \sum_{i=1}^{N_{\delta, \gamma}(t)} X_i,,~t\geq 0
			\end{equation*}
			and called as the inverse IG fractional CPP of order $k$. The \textit{pmf} $P^{k}_{\delta, \gamma}(n,t)=\mathbb{P}[Y_{\delta, \gamma}^{k}(t)=n]$ satisfy following fractional differential-difference equation is given by 
			\begin{align*}
				\mathcal{D}^{\delta, \gamma}_{t} P^{k}_{\delta, \gamma}(n,t) &= -k \lambda \left(1-\frac{1}{k}\sum_{j=1}^{n \wedge k} B^{j}\right)P^{k}_{\delta, \gamma}(n,t),\; n>0,\\
				\mathcal{D}^{\delta, \gamma}_{t}P^{k}_{\delta, \gamma}(0,t)  &= - k \lambda P^{k}_{\delta, \gamma}(0,t).
			\end{align*} 
		\end{spcase}
		
		\begin{spcase}
			Let $X_i,i=1,2,\ldots,$ be iid truncated geometrically distributed random variables. From the equation \eqref{tsfcpp}, we define the process
			$$
			Y_{\delta, \gamma}^{\rho,k}(t): = \sum_{i=1}^{N_{\delta, \gamma}(t)} X_i,,~t\geq 0
			$$
			is called inverse IG fractional PAPoK.
			The following results can be obtained as a particular case of the Theorem \eqref{Diff_GFPAOK}. The \textit{pmf} $P^{\rho,k}_{\delta, \gamma}(n,t)=\mathbb{P}[Y_{\delta, \gamma}^{\rho,k}(t)=n]$ satisfy following fractional differential equation
			\begin{align*}
				\mathcal{D}^{\delta, \gamma}_{t} P^{\rho,k}_{\delta, \gamma}(n,t) &= -\lambda \left(1-\frac{1-\rho}{1-\rho^k}\sum_{j=1}^{n \wedge k}\rho^{j-1}B^j\right)  P^{\rho,k}_{\delta, \gamma}(n,t),\\
				\mathcal{D}^{\delta, \gamma}_{t} P^{\rho,k}_{\delta, \gamma}(0,t) &= -\lambda P^{\rho,k}_{\delta, \gamma}(0,t).
			\end{align*}
			\noindent We can express the the process $ \{Y_{\delta, \gamma}^{\rho,k}(t)\}_{t \geq 0}$ in a time-changed form as follows
			$$
			Y_{\delta, \gamma}^{\rho,k}(t)\stackrel{d}{=}N^k_A(E_{\delta, \gamma}(t)),~t\geq 0.
			$$
		\end{spcase}

		\noindent We now summarize the results obtained in this section in the following table. 
		
		\begin{table}[!ht]
			\centering
			
			\caption{Summary of results obtained in Section \ref{subsec:tmfcpp}}

			\begin{tabular}{SSSSSSSS} \toprule
				{Jump Size Distributions} & {DDE} & {Time-changed Representations}\ \ \\ \midrule
				{Exponential} & {$\eta \mathcal{D}^{\alpha, \mu}_{t}  = -\left[\lambda +\mathcal{D}^{\alpha, \mu}(t)\right]\frac{\partial}{\partial x} $} & {$Y(E_{\alpha, \mu}(t))$}\\
				{Mittag-Leffler (ML)}  & {$\eta \mathcal{D}^{\alpha, \mu}_{t}  = -\left[\lambda +\mathcal{D}^{\alpha, \mu}_{t}\right]\mathcal{D}^{\beta}_{x} $} & {$D_{\beta}(Y_{\alpha, \mu}^{\eta}(t))$}   \\
				{Tempered ML}  & {$\eta \mathcal{D}^{\alpha, \mu}_{t}  = -\left[\lambda +\mathcal{D}^{\alpha, \mu}_{t}\right]\mathcal{D}^{\beta, \nu}_{x} $} & {$D_{\beta, \nu}(Y_{\alpha, \mu}^{\eta}(t))$}  \\
				{Inter-times Bernst\'ein}  & {$\eta \mathcal{D}^{\alpha,\mu}_{t}  = -\left[\lambda +\mathcal{D}^{\alpha,\mu}_{t}\right]\mathcal{D}^{g}_{x}$} & {$D_{g}(Y_{\alpha,\mu}^{\eta}(t))$}    \\ 
				{Truncated geometric}  & {$\mathcal{D}^{\alpha, \mu}_{t} = -\lambda \left(1-\frac{1-\rho}{1-\rho^k}\sum_{j=1}^{n \wedge k}\rho^{j-1}B^j\right)$} & {$N_A^k(E_{\alpha,\mu}(t))$}   \\
				{Discrete uniform}  & {$\mathcal{D}^{\alpha,\mu}_{t} =- \left[{\left( \mu + k \lambda \left(1-\frac{1}{k}\sum_{j=1}^{n \wedge k} B^{j}\right)\right)}^{\alpha} - \mu^{\alpha}\right]$} & {$N^k(E_{\alpha,\mu}(t))$}  \\ \bottomrule
			\end{tabular}
		\end{table}
		\begin{table}[!ht]
			\centering
			
			\caption{Summary of results obtained in Section \ref{subsec:iig}}
			
			\begin{tabular}{SSSS} \toprule
				{Jump Size Distributions} & {DDE} & {Time-changed Representations}\ \ \\ \midrule
				{Exponential} & {$\eta \mathcal{D}^{\delta, \gamma}_{t}  = -\left[\lambda +\mathcal{D}^{\delta, \gamma}(t)\right]\frac{\partial}{\partial x} $} & {$Y(N_{\delta, \gamma}(t))$}\\
				{Mittag-Leffler (ML)}  & {$\eta \mathcal{D}^{\delta, \gamma}_{t}  = -\left[\lambda +\mathcal{D}^{\delta, \gamma}_{t}\right]\mathcal{D}^{\beta}_{x} $} & {$D_{\beta}(Y_{\delta, \gamma}^{\eta}(t))$}   \\
				{Inter-times inverse of IGS}  & {$\eta \mathcal{D}^{\delta, \gamma}_{t}  = -\left[\lambda +\mathcal{D}^{\delta, \gamma}_{t}\right]\mathcal{D}^{\theta, \chi}_{x} $} & {$D_{\theta, \chi}(Y_{\delta, \gamma}^{\eta}(t))$}  \\
				{Inter-times Bernst\'ein}  & {$\eta \mathcal{D}^{\delta, \gamma}_{t}  = -\left[\lambda +\mathcal{D}^{\delta, \gamma}_{t}\right]\mathcal{D}^{g}_{x}$} & {$D_{g}(Y_{\delta, \gamma}^{\eta}(t))$}    \\ 
				{Truncated geometric}  & {$\mathcal{D}^{\delta, \gamma}_{t} = -\lambda \left(1-\frac{1-\rho}{1-\rho^k}\sum_{j=1}^{n \wedge k}\rho^{j-1}B^j\right)$} & {$N_A^k(E_{\delta, \gamma}(t))$}   \\
				{Discrete uniform}  & {$\mathcal{D}^{\delta, \gamma}_{t} =- {\left[] k \lambda \left(1-\frac{1}{k}\sum_{j=1}^{n \wedge k} B^{j}\right)\right]}^{\alpha}$} & {$N^k(E_{\delta, \gamma}(t))$}  \\ \bottomrule
			\end{tabular}
		\end{table}
		
		\section{Simulations}\label{sec:sim}
		\noindent In this section, we simulate the sample trajectories of the special cases of the  GFCPP $\{Y_f(t)\}_{t\geq 0}$ \eqref{GFCPP}. 
		First, we reproduce an algorithm for generating the sample paths for CPP $\{Y(t)\}_{t\geq 0}$ \eqref{CPP} with the jump size distribution $F_X$. 
		\begin{algorithm}[H]
			\caption{Simulation of the CPP}\label{algo-ing}
			\begin{algorithmic}[1]
				\renewcommand{\algorithmicrequire}{\textbf{Input:}}
				\renewcommand{\algorithmicensure}{\textbf{Output:}}
				\REQUIRE $\lambda>0 $, $T\geq0$, and $\theta$ (parameter of $F_X$).
				\ENSURE  $Y(t)$,  simulated sample paths for the CPP with jump size distribution $F_X$.
				\\ \textit{Initialisation} : $t=0$, $Y=0$ and $v=0$.
				\WHILE{$t < T$} 
				\STATE generate a uniform random variable $U \sim U(0, 1)$.
				\STATE set $t \leftarrow t +\left[-\frac{1}{\lambda}\log{U}\right]$.
				\STATE generate an independent random variable $X$ with distribution $F_X$ with parameter $\theta$.
				\STATE set $v \leftarrow v + X$ and append in $Y$.
				\ENDWHILE
				\STATE 	\textbf{return }$Y.$
			\end{algorithmic}
		\end{algorithm}
		Here, $Y$ denotes the number of events that occurred up to time $T>0$.\\
		Now, we present algorithm for time-changed by IIGS and ITSS  processes as we use the corresponding algorithms mentioned above for respective subordinators. 
		
		\begin{algorithm}[H]
			
			\caption{Simulation for time-changed CPP with IIGS (ITSS)}\label{sim:tcpp}
			\begin{algorithmic}[1]
				\renewcommand{\algorithmicrequire}{\textbf{Input:}}
				\renewcommand{\algorithmicensure}{\textbf{Output:}}
				\REQUIRE Parameter $\delta$ and $\gamma$ for respective IIGS $E_{\delta,\gamma}(t)$ (ITSS $E_{\alpha, \mu}(t)$), $\lambda>0$, $T\geq0$, $\eta>0$.
				\ENSURE The sample paths of the subordinated Poisson process $Y(E_{\delta,\gamma}(t_i)),\ 1 \leq i \leq n.$
				\\ \textit{Initialisation} :
				\STATE set $dt = \frac{T}{n}$ and choose $n+1$ uniformly spaced time points $0=t_0, t_1, \dots, t_n=T$ with $dt=t_1-t_0$.
				
				\STATE  generate the values $E_{\delta,\gamma}(t_i),\ 1 \leq i \leq n,$ ( $E_{\alpha,\mu}(t)$) for the IIGS (ITSS) at time points $t_1, t_2, ..., t_n$ using the Algorithms from \cite{TCFPP-pub}
				
				\STATE use the values $E_{\delta,\gamma}(t_i),\ 1 \leq i \leq n,$ generated in Step 2, as time points and compute the number of events of the subordinated CPP $Y(E_{\delta,\gamma}(t_i)),\ 1 \leq i \leq n,$ using Algorithm \ref{algo-ing}.
				\STATE \textbf{return}	  $Y(E_{\delta,\gamma}(t_i))$
			\end{algorithmic}
		\end{algorithm}
		Using Algorithm \ref{algo-ing} and \ref{sim:tcpp}, we generate the sample paths for the chosen set of parameters which is given below 
		\begin{table}[!ht]
			\centering
			\begin{tabular}{|c|p{2.8cm}|p{2.8cm}|p{2.8cm}|p{2.8cm}|}
				\hline &Exponential& Mittag-Leffler & Discrete Uniform& Discrete logarithm   \\\hline 
				CPP (Fig \ref{fig:cpp}) & $\eta=2,\lambda=4$ & $\eta=2, \lambda=4, \alpha=0.9$ & $\lambda=4, k=5$ & $\lambda=4, q=0.5$\\
				CPP-IIGN (Fig \ref{fig:iign})&$\eta=2,\lambda=4, \delta=0.3, \gamma=1$ &$\eta=2,\lambda=4, \delta=0.3, \gamma=1, \beta=0.9$ &$\lambda=4, \delta=0.3, \gamma=1, k=5$& $\lambda=4, \delta=0.3, \gamma=1, q=0.5$\\
				CPP-ITSS (Fig \ref{fig:itss})&$\eta=2,\lambda=4, \alpha=0.7, \mu=2$ &$\eta=2,\lambda=4, \alpha=0.7, \mu=2, \beta= 0.9$&$\lambda=4, \alpha=0.7, \mu=2, k=5$ &$\lambda=4, \alpha=0.7, \mu=2, q= 0.5$\\\hline
				
			\end{tabular}
			\caption{Table of parameters for the generated sample paths}
			
		\end{table}
		\begin{figure}[!ht]
			\centering
			\begin{minipage}{.2\textwidth}
				\centering
				\includegraphics[width=1\linewidth]{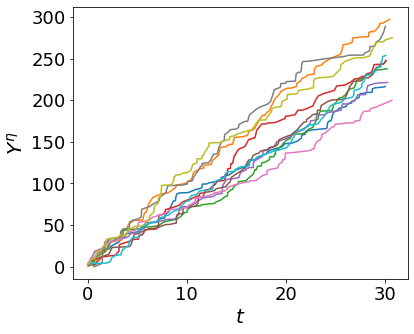}
			\end{minipage}%
			\begin{minipage}{.2\textwidth}
				\centering
				\includegraphics[width=1\linewidth]{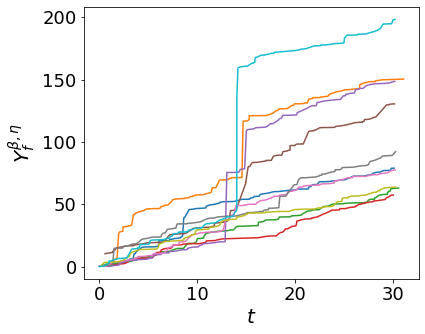}
			\end{minipage}
			\begin{minipage}{.2\textwidth}
				\centering
				\includegraphics[width=1\linewidth]{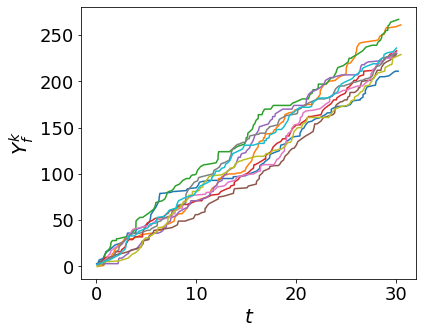}
			\end{minipage}
			\begin{minipage}{.2\textwidth}
				\centering
				\includegraphics[width=1\linewidth]{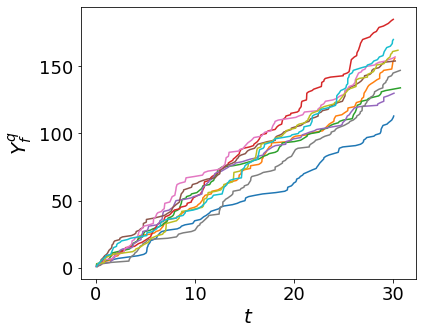}
			\end{minipage}
			
			\caption{\small{CPP with exponential, Mittag-Leffler, discrete uniform, discrete logarithmic jumps sample trajectories }}\label{fig:cpp}
		\end{figure}
		\begin{figure}[!ht]
			\centering
			\begin{minipage}{.2\textwidth}
				\centering
				\includegraphics[width=1\linewidth]{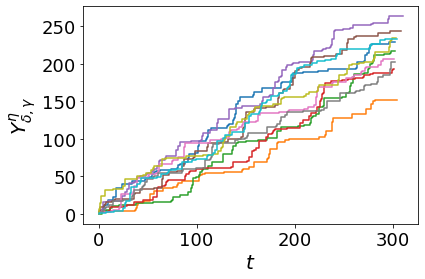}
			\end{minipage}%
			\begin{minipage}{.2\textwidth}
				\centering
				\includegraphics[width=1\linewidth]
				{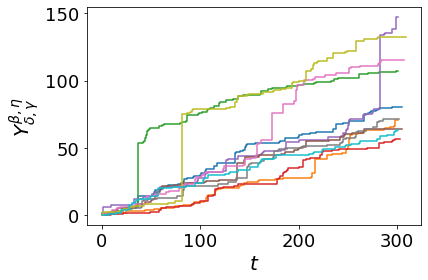}

			\end{minipage}
			\begin{minipage}{.2\textwidth}
				\centering
				\includegraphics[width=1\linewidth]
				{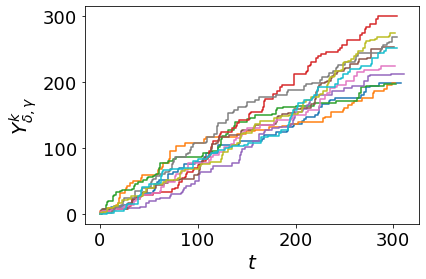}

			\end{minipage}
			\begin{minipage}{.2\textwidth}
				\centering
				\includegraphics[width=1\linewidth]
				{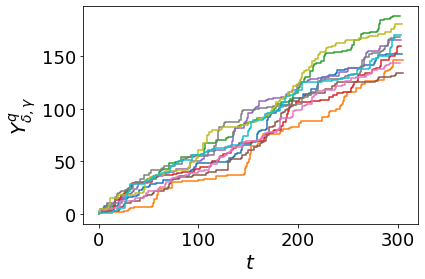}
			\end{minipage}
			\caption{\small{Inverse IG fractional CPP with exponential, Mittag-Leffler, discrete uniform, discrete logarithmic jumps sample trajectories }}\label{fig:iign}
		\end{figure}
		\begin{figure}[!ht]
			\centering
			\begin{minipage}{.2\textwidth}
				\centering
				\includegraphics[width=1\linewidth]{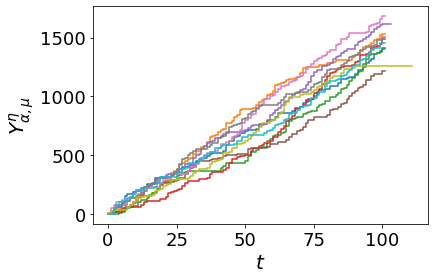}
			\end{minipage}%
			\begin{minipage}{.2\textwidth}
				\centering
				\includegraphics[width=1\linewidth]{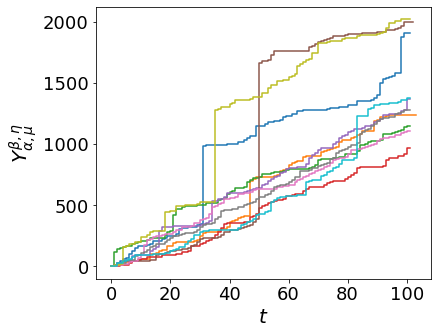}
			\end{minipage}
			\begin{minipage}{.2\textwidth}
				\centering
				\includegraphics[width=1\linewidth]{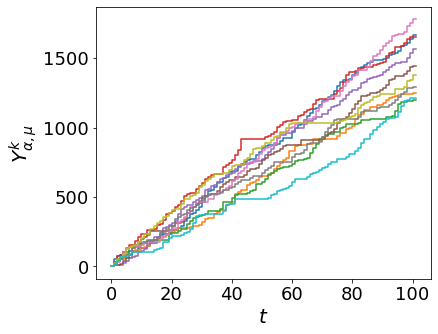}
			\end{minipage}
			\begin{minipage}{.2\textwidth}
				\centering
				\includegraphics[width=.9\linewidth]{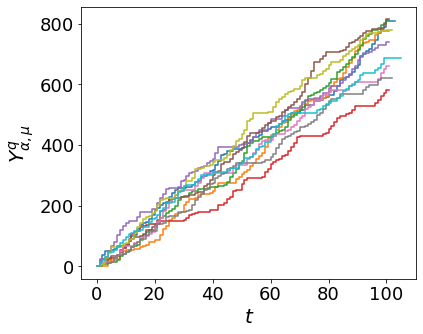}
			\end{minipage}
			
			\caption{ \small{Tempered fractional CPP exponential, Mittag-Leffler, discrete uniform, discrete logarithmic jumps sample trajectories}}\label{fig:itss}
		\end{figure}

		\nomenclature{$Y_f(t)$}{Generalized fractional compound Poisson process (GFCPP)}
		\nomenclature{$Y_f^{\eta}(t)$}{GFCPP with exponential jumps}
		\nomenclature{$Y_f^{\beta, \eta}(t)$}{GFCPP with Mittag-Leffler jumps}
		\nomenclature{$Y_f^g(t)$}{GFCPP with Bernst\'ein jumps}
		\nomenclature{$Y_f^k(t)$}{Generalized fractional Poisson process of order $k$ (GFPPoK)}
		\nomenclature{$Y_f^{\rho, k}(t)$}{Generalized fractional P\'olya Aeppli process of order $k$ (GPAPoK)}
		\nomenclature{$Y_f^q(t)$}{Generalized fractional negative binomial process (GFNBP)}
		\nomenclature{$Y_{\alpha, \mu}(t)$}{Tempered fractional CPP (TFCPP)}
		\nomenclature{$Y_{\alpha, \mu}^{\eta}(t)$}{TFCPP with exponential jump}
		\nomenclature{$Y_{\alpha, \mu}^{\beta,\eta, \nu}(t)$}{TFCPP with tempered Mittag-Leffler jumps}
		\nomenclature{$Y_{\alpha, \mu}^{g}(t)$}{TFCPP with Bernst\'ein jumps}
		\nomenclature{$Y_{\alpha, \mu}^{\rho,k}(t)$}{Tempered fractional PAPoK}
		\nomenclature{$Y_{\alpha, \mu}^{k}(t)$}{Tempered fractional PPoK}
		\nomenclature{$Y_{\delta, \gamma}(t)$}{Inverse IG fractional CPP}
		\nomenclature{$Y_{\delta, \gamma}^k(t)$}{Inverse IG fractional PPoK}
		\nomenclature{$Y_{\delta, \gamma}^{\rho,k}(t)$}{Inverse IG fractional PAPoK}
		\nomenclature{$Y_{\delta, \gamma}^{\eta}(t)$}{Inverse IG fractional CPP with exponential jumps}
		\nomenclature{$Y_{\delta, \gamma}^{\beta,\eta}(t)$}{Inverse IG fractional CPP with Mittag-Leffler jumps}
		\nomenclature{$Y_{\delta, \gamma}^{\theta, \chi}(t)$}{Inverse IG fractional CPP with IG type jumps}
		\nomenclature{$Y_{\delta, \gamma}^{g}(t)$}{Inverse IG fractional CPP with Bernst\'ein jumps}


	\def\cprime{$'$}
	
			\printnomenclature
\end{document}